\documentclass[12pt,leqno]{amsart}
\usepackage{amsfonts,amsthm,amsmath,xcolor,enumitem,comment}


\theoremstyle{plain}
\newtheorem{thm}{Theorem}[section]
\newtheorem{prop}{Proposition}[section]
\newtheorem{lem}{Lemma}[section]

\theoremstyle{definition}
\newtheorem{df}{Definition}[section]
\newtheorem{rem}{Remark}[section]
\newtheorem{ex}{Example}[section]

\newcommand{\FF}{\mathbb{F}}

\newcommand{\RR}{\mathbb{R}}

\newcommand{\D}{\mathcal{D}}
\newcommand{\R}{\mathbb{R}}

\newcommand{\la}{\langle}
\newcommand{\ra}{\rangle}

\newcommand{\bbinom}[2]{\left[\!\begin{array}{c} #1 \\ #2 \end{array}\!\right]}

\def\bm#1{\mathbf{#1}}

\DeclareMathOperator{\supp}{supp}
\DeclareMathOperator{\Supp}{Supp}

\DeclareMathOperator{\wt}{wt}

\DeclareMathOperator{\Harm}{Harm}

\DeclareMathOperator{\Mat}{Mat}


\begin{document}

\title[Higher and extended Jacobi polynomials]{Higher and extended Jacobi polynomials for codes}

\author[Chakraborty]{Himadri Shekhar Chakraborty*}
\thanks{*Corresponding author}
\address
{
	Department of Mathematics, 
	Shahjalal University of Science and Technology\\ Sylhet-3114, Bangladesh\\
}
\email{himadri-mat@sust.edu}

\author[Miezaki]{Tsuyoshi Miezaki}
\address
{
	Faculty of Science and Engineering, 
	Waseda University, 
	Tokyo 169-8555, Japan\\
}
\email{miezaki@waseda.jp}

\date{}
\maketitle

\begin{abstract}
In this paper, we introduce Jacobi polynomial generalizations of 
several classical invariants in coding theory over finite fields, 
specifically, the higher and extended weight enumerators,
and we establish explicit correspondences 
between the resulting Jacobi polynomials.
Moreover, we present the Jacobi analogue of MacWilliams identity 
for both higher and extended weight enumerators. 
We also present that the higher Jacobi polynomials for linear codes 
whose subcode supports form $t$-designs 
can be uniquely determined
from the higher weight enumerators of the codes 
via polarization technique.
Finally, 
we demonstrate how higher Jacobi polynomials
can be computed 
from harmonic higher weight enumerators
with the help of Hahn polynomials.
\end{abstract}

{\small
\noindent
{\bfseries Key Words:}
Jacobi polynomials, weight enumerators, codes, harmonic functions.\\ \vspace{-0.15in}

\noindent
2010 {\it Mathematics Subject Classification}. 
Primary 11T71;
Secondary 94B05, 11F11.\\ \quad
}


\section{Introduction}

The concept of higher weights of a linear code over a finite field 
is a remarkable generalization of the classical Hamming weights.
Wei~\cite{Wei1991} introduced the notion of higher weights 
in the study of linear codes over finite fields. 
As the theory developed,
it led to natural generalizations of 
classical results in coding
theory. In particular,
Kl{\o}ve~\cite{Klove1992} and
Simonis~\cite{Simonis1993} 
independently gave a celebrated generalization of 
the MacWilliams identity (cf.~\cite{MacWilliams})
for these higher weights.
Motivated by Kl{\o}ve's proof of higher weight generalization of 
the MacWilliams identity 
that involves the use of extension fields,
Jurrius and Pellikaan~\cite{JP2013}
defined extension codes
which extend the theory of linear codes 
to those defined over finite field extensions.
Moreover, Britz and Shiromoto~\cite{BrSh2008} 
presented a construction of $t$-designs 
with respect to 
the subcode supports of linear codes over finite fields,
and
generalized the Assmus-Mattson Theorem (cf.~\cite{AsMa69})
for such subcodes; 
see also \cite{Tanabe2001}.

The coding theoretical analogue of Jacobi forms (cf.~\cite{EZ}) are 
known as Jacobi polynomials, representing a significant
generalization of the weight enumerator of a code.
The notion of Jacobi polynomials for linear codes 
with respect to a reference vector
was introduced by Ozeki~\cite{Ozeki}.
He also presented the Jacobi analogue 
the MacWilliams identity for linear codes. 
Surprisingly,
Bannai and Ozeki~\cite{BO} applied polarization techniques 
to study Jacobi polynomials
for linear codes whose codewords hold $t$-designs. 
Later, Bonnecaze, Mourrain and Sol\'e~\cite{BoMoSo1999} 
redefined the Jacobi polynomials 
in the sense of coordinate positions.
They pointed out that
the Jacobi polynomials for linear codes whose codewords hold $t$-designs 
can be uniquely determined
from the weight enumerators of the codes with the help of polarization operator.
Many authors studied several generalizations of the Jacobi polynomials for codes;
for instance, see~\cite{CIT2024, CM2021, CMO2022, CMOT2023, HOO2020}.

Bachoc~\cite{Bachoc} made use of 
Delsarte's~\cite{Delsarte} 
concept of discrete harmonic functions 
to define harmonic weight enumerators of binary codes, and presented 
a striking generalization of the MacWilliams identity 
for it. 
She also presented a computation of Jacobi polynomials
using harmonic weight enumerators and Hahn polynomials (cf.~\cite{Hahn1949}).
Recently, Britz, Chakraborty and Miezaki~\cite{BrChMi2026}
introduced harmonic higher and extended weight enumerators of 
linear codes over finite fields
and presented the harmonic generalization of the MacWilliams identity
for higher weights. 
Moreover, they gave a new proof of the Assmus-Mattson Theorem
for subcode support designs
using harmonic higher weight enumerators.

In this paper, we present the Jacobi generalizations of 
well-known polynomials of codes over finite fields, 
namely the higher weight enumerators and the extended weight enumerators,
and we derive the correspondences between these Jacobi polynomials.
Moreover, we present the Jacobi analogue of MacWilliams identity 
for the higher (resp. extended) weight enumerators. 
We also present that the higher Jacobi polynomials for linear codes 
whose subcode supports hold $t$-designs 
can be uniquely determined
from the higher weight enumerators of the codes 
through polarization technique.
Finally, 
we compute 
higher Jacobi polynomials from harmonic higher weight enumerators
with the help of Hahn polynomials.

This paper is organized as follows. 
In Section~\ref{Sec:Preli}, 
we present basic definitions and properties 
of linear codes that are frequently used in this paper.
In Section~\ref{Sec:GenJac}, 
we define the higher (resp.\ extended) Jacobi polynomials 
for codes over finite fields,
and give formulae to compute these; 
see Theorems~\ref{Thm:HigherJacReinter} and~\ref{Thm:ExtendedReinter}.
Moreover, we present the MacWilliams-type identity for 
higher (resp.\ extended Jacobi polynomials of codes 
over finite fields; 
see Theorems~\ref{Thm:HarmHigherMac} and~\ref{Thm:JacExtMacWilliams.}. 
In Section~\ref{Sec:Connection},
we provide the relationships between these polynomials;
see Theorems~\ref{Thm:ExtendedtoHigher} and~\ref{Thm:HigherToExtended}.  
In Section~\ref{Sec:Design}, 
we observe how polarization technique can be
applied to obtain the higher Jacobi polynomials 
attached to coordinate places of a code
over finite fields;
Theorems~\ref{Thm:JacToDesign}, \ref{Thm:HigherJac_1_Design} and~\ref{Thm:HigherJac_t_design}.
Finally, in Section~\ref{Sec:CompJac},
we compute 
higher Jacobi polynomials from harmonic higher weight enumerators
with the help of Hahn polynomials.

\section{Preliminaries}\label{Sec:Preli}

In this section, we present basic definitions and notation for linear codes 
that are frequently needed in this paper. 
We mostly follow the {definitions and notation} 
of~\cite{BrSh2008, HP2003, MS1977}.

Let $\FF_{q}$ be a finite field of order~$q$, where~$q$ is a 
prime power.  
Then $\FF_{q}^{n}$ denotes the vector space of 
dimension~$n$ with the usual inner product:
$\bm{u}\cdot\bm{v} := u_{1}v_{1} + \cdots + u_{n}v_{n}$
for $\bm{u},\bm{v} \in \FF_{q}^{n}$,
where
$\bm{u} = (u_{1},\ldots,u_{n})$ and $\bm{v} = (v_{1},\ldots,v_{n})$.
For $\bm{u} \in \FF_{q}^{n}$, we call
$\supp(\bm{u}) := \{i \in E \mid u_{i} \neq 0\}$
the \emph{support} of $\bm{u}$,
and
$\wt(\bm{u}) := |\supp(\bm{u})|$
the \emph{weight} of $\bm{u}$. 
Similarly, the \emph{support} and \emph{weight}
of each subset of vectors $D \subseteq \FF_{q}^{n}$ are defined as follows:
\begin{align*}
	\Supp(D) 
	& :=
	\bigcup_{\bm{u} \in D}
	\supp(\bm{u})\\
	\wt(D) 
	& :=
	|\Supp(D)|.
\end{align*}

An $\FF_{q}$-\emph{linear code} of length~$n$ is a linear subspace of $\FF_{q}^{n}$. 
Moreover, an $[n,k,d_{\min}]$~code 
is an $\FF_q$-linear code of length~$n$ with dimension~$k$ 
and minimum weight~$d_{\min}$.
Frequently, we call $[n,k,d_{\min}]$~codes $[n,k]$~codes.
The \emph{dual code}~$C^{\perp}$ of an $\FF_{q}$-linear code~$C$  
can be defined as follows:
\[
	C^{\perp}
	:=
	\{
	\bm{u} \in \FF_{q}^{n}
	\mid
	\bm{u} \cdot \bm{v} = 0
	\mbox{ for all }
	\bm{v} \in C
	\}.
\] 

Let $C$ be an $[n,k]$ code. 
Let $r,i$ be positive integers such that $r \leq k$ and $i \leq n$.
Now we define
\begin{align*}
	\D_{r}(C)
	& :=
	\{
	D \mid D \text{ is an } [n,r] \text{ subcode of } C 
	\},\\
	\mathcal{S}_{r}(C)
	& :=
	\{
	\Supp(D) \mid D \in \D_{r}(C)
	\},\\
	\mathcal{S}_{r,i}(C)
	& :=
	\{
	X \in \mathcal{S}_{r}(C) \mid |X| = i
	\}.
\end{align*}
Note that, in general, 
the sets $\mathcal{S}_{r}(C)$ and $\mathcal{S}_{r,i}(C)$ are multisets.
Also, the support of a nonzero codeword $\bm{v}$ 
is identical to that of its span~$\la \bm{v} \ra$. 
Conversely, each member of $\mathcal{S}_{1}(C)$ is the
support of some nonzero codeword. 
Therefore, the sets $\mathcal{S}_{1}(C),\ldots,\mathcal{S}_{k}(C)$ 
extend the notion of codeword support.

Let $C$ be an $\FF_{q}$-linear code of length~$n$.
Then the $r$-th generalized Hamming weight of~$C$
for any~$r$ such that $r \leq k$, is defined as
\[
	d_{r}
	:=
	d_{r}(C)
	:=
	\min
	\{
		\wt(D)
		\mid
		D \in \mathcal{D}_{r}(C)
	\}.
\]
Moreover, the \emph{$r$-th higher weight distribution} of $C$ is the sequence 
$$\{A_{i}^{(r)}(C)\mid i=0,1, \dots, n \},$$ 
where $A_{i}^{(r)}(C)$ is the number of subcodes with a given weight~$i$
and dimension~$r$. That is,
\[
	A_{i}^{(r)}(C)
	:=
	\#
	\{
		D\in \mathcal{D}_{r}(C)
		\mid
		\wt(D) = i
	\}.
\]
Then the polynomial
\[
	W_{C}^{(r)}(x,y) 
	:= 
	\sum^{n}_{i=0} 
	A_{i}^{(r)}(C) x^{n-i} y^{i}
\]
is called the \emph{$r$-th higher weight enumerator} of $C$
and satisfies the following MacWilliams-type identity
stated in~\cite[{Theorem 5.14}]{JP2013}:
\[
	W_{C^{\perp}}^{(r)}(x,y)
	=
	\sum_{j=0}^{r}
	\sum_{\ell=0}^{j}
	(-1)^{r-j}
	\frac{q^{{r-j \choose 2}-j(r-j)-\ell(j-\ell)-jk}}{[r-j]_{q} [j-\ell]_{q}}
	W_{C}^{(\ell)}(x+(q^{j}-1)y,x-y).
\]
\noindent
Here, for all integers $a,b\geq 0$, 
\[
	[a]_q 
	:= 
	[a,a]_q
	\qquad\textrm{where}\qquad
	[a,b]_{q} 
	:=
	\prod_{i=0}^{b-1}
	\big(q^{a}-q^{i}\big).
\]

\noindent
Also, define 
\begin{align*}
	{\bbinom{a}{b}}_{q}
	&:=
	\frac{[a,b]_{q}}{[b]_{q}}
      =
    \prod_{i=0}^{b-1}\frac{ q^{a}-q^{i}}{q^{b}-q^{i}}\,.	
\end{align*}

The following two {lemmas} provide important and useful identities.
{The second lemma follows immediately from~\cite[Theorem 25.2]{vLiWi01}.}

\begin{lem}\label{Rem:TBritz}
	
	$[a,b]_{q} 
	= 
	\displaystyle
    \sum_{i=0}^{b} 
	{\bbinom{b}{i}}_{q}
	(-1)^{b-i}
	q^{\binom{b-i}{2}}
	(q^a)^i$.
\end{lem}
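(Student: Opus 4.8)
The plan is to regard both sides as polynomials of degree $b$ in the single variable $x := q^{a}$, so that the claim becomes the ``$q$-falling factorial'' expansion
\[
	\prod_{i=0}^{b-1}(x - q^{i})
	=
	\sum_{i=0}^{b} {\bbinom{b}{i}}_{q}(-1)^{b-i} q^{\binom{b-i}{2}} x^{i},
\]
which is one form of the Gaussian ($q$-)binomial theorem. I would prove it by induction on $b$, the one substantial ingredient being the standard $q$-Pascal recurrence
\[
	{\bbinom{b+1}{j}}_{q}
	=
	{\bbinom{b}{j-1}}_{q} + q^{j}\,{\bbinom{b}{j}}_{q},
\]
which follows directly from the definition of ${\bbinom{\cdot}{\cdot}}_{q}$.

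The base case $b=0$ is the empty product $[a,0]_{q}=1$, matching the single term $i=0$ on the right. For the inductive step I would use the factorization $[a,b+1]_{q} = [a,b]_{q}\,(q^{a}-q^{b})$ and substitute the degree-$b$ identity, obtaining
\[
	[a,b+1]_{q}
	=
	\sum_{j=0}^{b} {\bbinom{b}{j}}_{q}(-1)^{b-j} q^{\binom{b-j}{2}} q^{a(j+1)}
	-
	\sum_{j=0}^{b} {\bbinom{b}{j}}_{q}(-1)^{b-j} q^{\binom{b-j}{2}+b} q^{aj}.
\]
Shifting the index $j \mapsto j-1$ in the first sum, I would then read off the coefficient of $q^{aj}$ and factor out the common sign $(-1)^{b+1-j}$.

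The crux is the bookkeeping of the powers of $q$. After factoring, the coefficient of $q^{aj}$ is $(-1)^{b+1-j}$ times
\[
	{\bbinom{b}{j-1}}_{q}\, q^{\binom{b+1-j}{2}}
	+
	{\bbinom{b}{j}}_{q}\, q^{\binom{b-j}{2}+b},
\]
and I want this to equal ${\bbinom{b+1}{j}}_{q}\, q^{\binom{b+1-j}{2}}$. Dividing by $q^{\binom{b+1-j}{2}}$ and using the elementary identity $\binom{m+1}{2}-\binom{m}{2}=m$ with $m=b-j$, the second exponent collapses to $\binom{b-j}{2}+b-\binom{b+1-j}{2}=j$, so the whole step reduces exactly to the $q$-Pascal recurrence displayed above, completing the induction. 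The only real obstacle is this exponent simplification: getting the $q$-powers to line up so that the recurrence applies verbatim, after which the result is immediate.

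Alternatively, one can bypass the induction entirely by invoking the standard $q$-binomial theorem $\prod_{i=0}^{b-1}(1+q^{i}t)=\sum_{j=0}^{b} q^{\binom{j}{2}}{\bbinom{b}{j}}_{q}\, t^{j}$, substituting $t=-x^{-1}$, multiplying through by $x^{b}$, and reindexing by means of the symmetry ${\bbinom{b}{j}}_{q}={\bbinom{b}{b-j}}_{q}$; this recovers the displayed expansion directly with $x=q^{a}$.
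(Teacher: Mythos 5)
Your proof is correct, and it is genuinely different from what the paper does: the paper offers no proof of Lemma~\ref{Rem:TBritz} at all, stating it bare with only the surrounding remark that it follows immediately from \cite[Theorem 25.2]{vLiWi01}. The argument behind that citation is the standard enumerative one, equivalent to the $q$-binomial theorem you mention at the end: $[a,b]_{q}$ counts injective linear maps $\FF_{q}^{b}\to\FF_{q}^{a}$, the maps whose kernel contains a fixed $m$-dimensional subspace number $q^{a(b-m)}$, and M\"obius inversion on the subspace lattice of $\FF_{q}^{b}$ (whose M\"obius function is $(-1)^{m}q^{\binom{m}{2}}$, with ${\bbinom{b}{i}}_{q}$ counting the possible kernels of dimension $m=b-i$) gives exactly the alternating sum. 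So your closing ``alternative'' is essentially the intended proof behind the paper's citation, while your main argument --- viewing both sides as polynomials in $x=q^{a}$ and inducting on $b$ --- is a self-contained elementary substitute. Its details check out: the base case is right, the index shift and sign factoring are right, the exponent computation $\binom{b-j}{2}+b-\binom{b+1-j}{2}=j$ is right, and the inductive step then reduces exactly to the $q$-Pascal recurrence ${\bbinom{b+1}{j}}_{q}={\bbinom{b}{j-1}}_{q}+q^{j}{\bbinom{b}{j}}_{q}$, which is standard (just adopt the conventions ${\bbinom{b}{-1}}_{q}={\bbinom{b}{b+1}}_{q}=0$ so that the coefficient comparison is also valid at the boundary values $j=0$ and $j=b+1$). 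What your induction buys is a reference-free, line-by-line verifiable proof; what the paper's citation buys is brevity and the combinatorial meaning of the identity --- counting maps by their kernels --- which is the same kind of counting that drives its later use alongside Lemma~\ref{Rem:BXrC} in the proof of Theorem~\ref{Thm:HigherToExtended}.
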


\section{Generalizations of Jacobi polynomials}\label{Sec:GenJac}

Ozeki~\cite{Ozeki} first defined the notion of 
Jacobi polynomials for codes over finite field 
and presented a MacWilliams-type identity for these polynomials. 
Later, several articles extended this concept 
for higher genus cases~\cite{CHMO2025, CM2021} 
as well as for multiple reference sets~\cite{CIT2024, CMOT2023}.
In this section, 
we introduce the concept of higher (resp. extended) 
Jacobi polynomials for codes over finite fields.

Let $[n] := \{1,2,\ldots,n\}$ be a finite set.
We denote by~$2^{[n]}$ the set of all subsets of~$[n]$,
while for all $t = 0, 1, \ldots, n$,
$\binom{[n]}{t}$ is 
the set of all $t$-subsets of $[n]$.
Let $T \subseteq [n]$.
For $\bm{u} \in \FF_{q}^{n}$,
$\supp_{T}(\bm{u})$ and $\wt_{T}(\bm{u})$
denote the support and weight of~$\bm{u}$ on~$T$, respectively.
Let $C$ be an $[n,k]$ code over~$\FF_{q}$.
For any $D \in \mathcal{D}_{r}(C)$, where $0 \leq r \leq k$,
$\Supp_{T}(D)$ and $\wt_{T}(D)$ denote the support
and weight of~$D$ on~$T$, respectively.
If $T = [n]$, then $\supp_{T}(\bm{u})$ ($\Supp_{T}(D)$) 
and $\wt_{T}(\bm{u})$ (resp. $\wt_{T}(D)$)
coincide with $\supp(\bm{u})$ (resp. $\Supp(D)$) 
and $\wt(\bm{u})$ ($\wt(D)$), respectively.
Moreover, we denote $\overline{T} := [n]\backslash T$.

\begin{df}\label{Def:JacOne}
	Let $C$ be an $\FF_{q}$-linear code of length~$n$. Then 
	the \emph{Jacobi polynomial} attached to a set $T$ of 
	coordinate places of the code~$C$ is defined as follows:
	\[
		J_{C,T}(w,z,x,y) 
		:=
		\sum_{\bm{u}\in C}
		w^{m_0(\bm{u})}
		z^{m_1(\bm{u})}
		x^{n_0(\bm{u})}
		y^{n_1(\bm{u})},
	\]
	where $T\subseteq [n]$, and for $\bm{u}\in C$,
	\begin{align*}
		m_0(\bm{u}) & := \#\{i\in T \mid u_i=0\},\\
		m_{1}(\bm{u}) & := \#\{i \in T \mid u_{i} \neq 0\},\\
		n_0(\bm{u}) & := \#\{i\in \overline{T} \mid u_i= 0\},\\
		n_{1}(\bm{u})  & := \#\{i\in \overline{T} \mid u_i \neq 0\}.
	\end{align*}
Equivalently, we can express $J_{C,T}(w,z,x,y)$ as follows:
\begin{align*}
	J_{C,T}(w,z,x,y)
	& =
	\sum_{\ell = 0}^{n}\,
	\sum_{j = 0}^{|T|}\,
	n_{\ell,j}(C,T)\,
	w^{|T|-j}\,
	z^{j}\,
	x^{n-|T|-\ell+j}\,
	y^{\ell-j},\\
	& =
	\sum_{i = 0}^{n-|T|}\,
	\sum_{j = 0}^{|T|}\,
	A_{i,j}(C,T)\,
	w^{|T|-j}\,
	z^{j}\,
	x^{n-|T|-i}\,
	y^{i},
\end{align*}
where 
\begin{align*}
	n_{\ell,j}(C,T)
	& :=
	\#
	\{
		\bm{u} \in C
		\mid
		\wt(\bm{u}) = \ell
		\text{ and }
		\wt_{T}(\bm{u}) = j		
	\},\\
	A_{i,j}(C,T)
	& :=
	\#
	\{
	\bm{u} \in C
	\mid
	\wt_{\overline{T}}(\bm{u}) = i
	\text{ and }
	\wt_{T}(\bm{u}) = j		
	\}.
\end{align*}
\end{df}

\begin{rem}
	\(n_{\ell, j}(C,T) = A_{\ell-j,j}(C,T)\).
\end{rem}

\begin{rem}
	If $T \subseteq [n]$ is empty, then $J_{C,T}(w,z,x,y)=W_C(x,y)$.
\end{rem}

\subsection{Higher Jacobi polynomials}

$ $\\[1mm]
Let $C$ be an $[n,k]$ code over $\FF_{q}$ and let $T \subseteq [n]$.

\begin{df}\label{Def:HigherJacobi}
	The \emph{$r$-th higher Jacobi polynomial} of $C$ associated to $T$ is
	defined as follows:	
	\[
		J_{C,T}^{(r)}(w,z,x,y)
		=
		\sum_{i = 0}^{n-|T|}\,
		\sum_{j = 0}^{|T|}\,
		A_{i,j}^{(r)}(C,T)\,
		w^{|T|-j}\,
		z^{j}\,
		x^{n-|T|-i}\,
		y^{i},
	\]
	where 
	\(
		A_{i,j}^{(r)}(C,T)
		:=
		\#
		\{
			D \in \D_{r}(C)
			\mid
			\wt_{\overline{T}}(D) = i
			\text{ and }
			\wt_{T}(D) = j		
		\}.
	\)
\end{df}

\begin{rem}
	Clearly,
	$A_{0,0}^{(0)}(C,T) = 1$ and $A_{0,0}^{(r)}(C,T) = 0$ for 
	$r \neq 0$.
\end{rem}

\begin{rem}
	Since every $1$-dimensional subspace of $C$ contains $q-1$ nonzero codewords, 
    $(q - 1)A_{i,j}^{(1)}(C,T) = A_{i,j}(C,T)$ for $0 \leq i \leq n-|T|$ and 
    $0 \leq j \leq |T|$. 
	Therefore,
	\[
		J_{C,T}(w,z,x,y) 
		= 
		J_{C,T}^{(0)}(w,z,x,y)
		+
		J_{C,T}^{(1)}(w,z,x,y).
	\]
\end{rem}

\begin{thm}[MacWilliams type identity]\label{Thm:HarmHigherMac} 
	{Let $C$ be an $[n,k]$ code over $\FF_{q}$,
	and let $T \subseteq [n]$.
	Then }
	\begin{multline*}
		J_{C^{\perp},T}^{(r)}
		(w,z,x,y)
		=  
		\sum_{j=0}^{r}
		\sum_{\ell=0}^{j}
		(-1)^{r-j}
		\frac{q^{{r-j \choose 2}-j(r-j)-\ell(j-\ell)-jk}}{[r-j]_{q} [j-\ell]_{q}}\\
		J_{C,T}^{(\ell)}
		\left( 
			{w+(q^{j}-1)z},\, 
			{w-z},\,
			{x+(q^{j}-1)y},\, 
			{x-y}
		\right).
	\end{multline*}
\end{thm}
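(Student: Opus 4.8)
The plan is to reduce the identity to the classical (genus-one) Jacobi MacWilliams identity applied to extension codes, and then to invert a $q$-binomial system exactly as in the proof of the higher weight MacWilliams identity in~\cite{JP2013}. Throughout, write $\bm v = (w,z,x,y)$ and
\[
	\sigma_{Q}(\bm v) := \big(\,w+(Q-1)z,\ w-z,\ x+(Q-1)y,\ x-y\,\big),
\]
and record the reformulation
\[
	J_{C,T}^{(r)}(\bm v)
	=
	\sum_{D \in \D_{r}(C)}
	w^{\,|T|-\wt_{T}(D)}\, z^{\,\wt_{T}(D)}\,
	x^{\,|\overline{T}|-\wt_{\overline{T}}(D)}\, y^{\,\wt_{\overline{T}}(D)},
\]
so that $J_{C,T}^{(r)}$ records each $r$-dimensional subcode according to the pair $(\wt_{T}(D),\wt_{\overline{T}}(D))$.

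First I would establish a Kl\o ve-type relation between the higher Jacobi polynomials of $C$ and the ordinary Jacobi polynomial of its extension codes. For a positive integer $m$, let $C_{m} := C\otimes_{\FF_{q}}\FF_{q^{m}}$ be the extension code over $\FF_{q^{m}}$, as in~\cite{JP2013}, and fix an $\FF_{q}$-basis $\beta_{1},\dots,\beta_{m}$ of $\FF_{q^{m}}$. Every codeword $\bm c\in C_{m}$ is uniquely $\bm c = \sum_{s=1}^{m}\beta_{s}\bm c_{s}$ with $\bm c_{s}\in C$; since the $\beta_{s}$ are $\FF_{q}$-independent while $(\bm c_{s})_{i}\in\FF_{q}$, the $i$-th entry of $\bm c$ vanishes iff every $(\bm c_{s})_{i}$ does, whence $\supp(\bm c) = \Supp(D)$ for $D := \langle \bm c_{1},\dots,\bm c_{m}\rangle_{\FF_{q}}$. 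Consequently $\wt_{T}(\bm c) = \wt_{T}(D)$ and $\wt_{\overline{T}}(\bm c) = \wt_{\overline{T}}(D)$, so $\bm c$ and $D$ contribute the same monomial. Since the number of $m$-tuples in $C^{m}$ spanning a fixed $r$-dimensional $D$ equals $\prod_{i=0}^{r-1}(q^{m}-q^{i}) = [m,r]_{q}$, summing over $\bm c\in C_{m}$ and grouping by $D$ gives
\[
	J_{C_{m},T}(\bm v) = \sum_{r\ge 0} [m,r]_{q}\, J_{C,T}^{(r)}(\bm v).
\]

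Next I would apply the ordinary Jacobi MacWilliams identity (Ozeki~\cite{Ozeki}, in the coordinate-place form of~\cite{BoMoSo1999}) to $C_{m}$ over $\FF_{q^{m}}$, namely $J_{C_{m}^{\perp},T}(\bm v) = q^{-mk}\, J_{C_{m},T}(\sigma_{q^{m}}(\bm v))$. Using that extension commutes with duality, $C_{m}^{\perp} = (C^{\perp})_{m}$ (the same parity-check matrix serves over $\FF_{q^{m}}$), and inserting the Kl\o ve-type relation on both sides yields, for every $m$,
\[
	\sum_{r\ge 0}[m,r]_{q}\, J_{C^{\perp},T}^{(r)}(\bm v)
	= q^{-mk}\sum_{\ell\ge 0}[m,\ell]_{q}\, J_{C,T}^{(\ell)}\big(\sigma_{q^{m}}(\bm v)\big).
\]

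Finally I would extract the identity for a fixed $r$ by inverting this system in the parameter $q^{m}$; this is the step I expect to be the main obstacle, but it is formally identical to the inversion carried out in~\cite{JP2013} to pass from Kl\o ve's relation to the higher weight MacWilliams identity. The displayed relation is obtained from theirs by replacing each weight enumerator $W_{C}^{(\ell)}(x,y)$ with the Jacobi polynomial $J_{C,T}^{(\ell)}(\bm v)$ and the one-pair substitution $\tau_{q^{m}}(x,y)=(x+(q^{m}-1)y,\,x-y)$ with $\sigma_{q^{m}}(\bm v)$, while the coefficients $[m,r]_{q}$ and the scalar $q^{-mk}$ enter in exactly the same way. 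Treating both sides as polynomials in $Q=q^{m}$ (valid for infinitely many $m$, hence as a polynomial identity) and using Lemma~\ref{Rem:TBritz} to expand $[m,r]_{q} = \sum_{i}{\bbinom{r}{i}}_{q}(-1)^{r-i}q^{\binom{r-i}{2}}Q^{i}$ together with the inverse Gaussian-binomial pair, one solves the triangular system for $J_{C^{\perp},T}^{(r)}(\bm v)$. The point that lets the argument run verbatim is that $\sigma_{Q}$ is the pair-substitution $\tau_{Q}$ applied simultaneously to $(w,z)$ and to $(x,y)$ with one common parameter $Q$; the inversion acts only on $Q$, so it carries $\sigma_{q^{m}}$ to $\sigma_{q^{j}}$ just as it carries $\tau_{q^{m}}$ to $\tau_{q^{j}}$ in the scalar case. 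The resulting coefficients are precisely $(-1)^{r-j}q^{\binom{r-j}{2}-j(r-j)-\ell(j-\ell)-jk}/([r-j]_{q}[j-\ell]_{q})$, which is the asserted identity of Theorem~\ref{Thm:HarmHigherMac}.
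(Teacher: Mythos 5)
Your proposal is correct and takes essentially the same route as the paper: the paper proves your Kl{\o}ve-type relation as Theorem~\ref{Thm:ExtendedtoHigher}, your extension-code MacWilliams identity as Theorem~\ref{Thm:JacExtMacWilliams.}, and your inversion step as Theorem~\ref{Thm:HigherToExtended} (resting on Lemma~\ref{Rem:TBritz}), then chains the three and simplifies coefficients via the identity $\frac{1}{[r]_{q}}{\bbinom{r}{j}}_{q}\,[j,\ell]_{q}=q^{-j(r-j)-\ell(j-\ell)}/([r-j]_{q}\,[j-\ell]_{q})$, exactly as you assert. The only differences are cosmetic: the paper justifies the inversion by expanding the subcode counts $B_{X,Y}^{(r)}(C;T)={\bbinom{\ell_{T}(X,Y)}{r}}_{q}$ pointwise with Lemma~\ref{Rem:TBritz} rather than by your triangular-system argument in $Q=q^{m}$, and it applies the inversion to $J_{C^{\perp},T}^{(r)}$ first, substituting the MacWilliams identity and the Kl{\o}ve relation afterwards rather than combining first and inverting last.
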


{To pave the way for the proof of Theorem~\ref{Thm:HarmHigherMac}
that we give 
in Section~\ref{Sec:Connection},
we begin by presenting some propositions and theorems
related to~$J_{C,T}^{(r)}(w,z,x,y)$.}

Let $C$ be an $\FF_{q}$-linear code of length~$n$.
Let $T \subseteq [n]$.
Then for arbitrary subsets 
$X \subseteq \overline{T}$ and $Y \subseteq T$, and integer~$r \geq 0$,
we define
\begin{align*}
	C_{T}(X,Y) 
	& := 
	\{
		\bm{u} \in C 
		\mid 
		u_{i} = 0 \text{ for all } i \in X \cup Y
	\}\,,\\
	\ell_{T}(X,Y) 
	& := 
	\dim C_{T}(X,Y)\,,\\
	B_{X,Y}^{(r)}(C,T)
	& := 
	\#
	\{
	D \in \D_{r}(C)
	\mid
	D \subseteq C_{T}(X,Y)
	\}\,. 
\end{align*}

\begin{lem}\label{Rem:BXrC}
	$B_{X,Y}^{(r)}(C;T) = {\bbinom{\ell_{T}(X,Y)}{r}}_{q}$.
\end{lem}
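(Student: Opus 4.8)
The plan is to reduce the count $B_{X,Y}^{(r)}(C,T)$ to the classical enumeration of subspaces of a single fixed vector space. First I would observe that $C_{T}(X,Y)$ is a linear subspace of $\FF_{q}^{n}$: it is the solution set, inside $C$, of the homogeneous linear conditions $u_{i}=0$ for $i\in X\cup Y$, hence the intersection of $C$ with a coordinate subspace, and therefore itself an $\FF_{q}$-subspace. Consequently $\ell_{T}(X,Y)=\dim C_{T}(X,Y)$ is well defined, and $C_{T}(X,Y)$ is an $\ell_{T}(X,Y)$-dimensional vector space over $\FF_{q}$.

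The key step is to recognize that the two conditions defining the objects counted by $B_{X,Y}^{(r)}(C,T)$ — namely $D\in\D_{r}(C)$ together with $D\subseteq C_{T}(X,Y)$ — are jointly equivalent to the single condition that $D$ be an $r$-dimensional $\FF_{q}$-subspace of $C_{T}(X,Y)$. Indeed, since $C_{T}(X,Y)\subseteq C$, every subspace $D\subseteq C_{T}(X,Y)$ is automatically a subcode of $C$, so the requirement $D\in\D_{r}(C)$ adds nothing beyond $\dim D=r$. Thus $B_{X,Y}^{(r)}(C,T)$ equals the number of $r$-dimensional subspaces of the $\ell_{T}(X,Y)$-dimensional space $C_{T}(X,Y)$.

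It then remains to invoke the standard fact that the number of $r$-dimensional subspaces of an $\ell$-dimensional vector space over $\FF_{q}$ is $\bbinom{\ell}{r}_{q}$. I would justify this by a double count of ordered linearly independent $r$-tuples: on one hand there are $[\ell,r]_{q}$ of them, since the $j$-th vector may be chosen in $q^{\ell}-q^{j-1}$ ways, avoiding the span of the previous $j-1$; on the other hand each $r$-dimensional subspace contributes exactly $[r]_{q}=[r,r]_{q}$ such tuples as its ordered bases. Dividing yields $[\ell,r]_{q}/[r]_{q}=\bbinom{\ell}{r}_{q}$, and applying this with $\ell=\ell_{T}(X,Y)$ gives the claim.

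I do not expect a serious obstacle here: the statement is in essence a reformulation of the subspace-counting interpretation of the Gaussian binomial coefficient. The only point requiring genuine care is the verification in the second step that $D\subseteq C_{T}(X,Y)$ imposes no constraint beyond membership among the $r$-dimensional subspaces of $C_{T}(X,Y)$; this follows immediately once $C_{T}(X,Y)$ is known to be a subspace of $C$, so the entire argument is routine.
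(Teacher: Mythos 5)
Your proof is correct and coincides with what the paper intends: the paper states this lemma without proof, treating it as the standard fact that the $r$-dimensional subspaces of the $\ell_{T}(X,Y)$-dimensional space $C_{T}(X,Y)$ are counted by the Gaussian binomial coefficient, which is exactly the argument you spell out (including the correct double count of ordered independent $r$-tuples, $[\ell,r]_{q}$ in total and $[r]_{q}$ per subspace). Your observation that $D\subseteq C_{T}(X,Y)\subseteq C$ makes the condition $D\in\D_{r}(C)$ redundant is precisely the point the paper leaves implicit, so there is nothing to add.
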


\begin{prop}\label{Prop:Connection}
	Let $C$ be an~$[n,k]$ code over~$\FF_{q}$ and $T \subseteq [n]$.
	Define
	\[
		Q_{s,t}^{(r)}(C;T)
		:= 
		\sum_{X \in \binom{\overline{T}}{s}}\,
		\sum_{Y \in \binom{T}{t}} 
		B_{X,Y}^{(r)}(C;T).
	\]
	{Then
	we have the following relation between 
	$Q_{s,t}^{(r)}(C,T)$ and $A_{i,j}^{(r)}(C,T)$:}
	\[
		Q_{s,t}^{(r)}(C;T) 
		= 
		\sum_{i=0}^{n-|T|-s}\,
		\sum_{j=0}^{|T|-t} \,
		\binom{n-|T|-i}{s}\,
		\binom{|T|-j}{t}\,
		A_{i,j}^{(r)}(C;T).
	\]
\end{prop}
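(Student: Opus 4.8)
The plan is to prove this counting identity by a Fubini-type double-counting argument, interpreting both sides as enumerations of the same family of triples $(D, X, Y)$ with $D \in \D_r(C)$, $X \in \binom{\overline{T}}{s}$, $Y \in \binom{T}{t}$, and $D \subseteq C_T(X,Y)$.

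First I would unfold the definition of $Q_{s,t}^{(r)}(C;T)$. Rather than invoking the closed form of $B_{X,Y}^{(r)}(C;T)$ from Lemma~\ref{Rem:BXrC}, I would use its definition directly, so that
\[
  Q_{s,t}^{(r)}(C;T)
  =
  \sum_{X \in \binom{\overline{T}}{s}}
  \sum_{Y \in \binom{T}{t}}
  \#\{ D \in \D_r(C) \mid D \subseteq C_T(X,Y) \}
\]
is manifestly the total number of admissible triples $(D,X,Y)$. The next step is to reverse the order of summation and count, for each fixed $D$, the pairs $(X,Y)$ that make the triple admissible.

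The only step carrying real content is the translation of the membership condition $D \subseteq C_T(X,Y)$ into a statement about supports. By definition $C_T(X,Y)$ consists of the codewords of $C$ that vanish on every coordinate of $X \cup Y$, so a subcode $D$ satisfies $D \subseteq C_T(X,Y)$ precisely when $\Supp(D) \cap (X \cup Y) = \emptyset$. Since $X \subseteq \overline{T}$ and $Y \subseteq T$, this is equivalent to the two independent conditions $X \subseteq \overline{T} \setminus \Supp_{\overline{T}}(D)$ and $Y \subseteq T \setminus \Supp_T(D)$. Consequently, for a fixed $D$ with $\wt_{\overline{T}}(D) = i$ and $\wt_T(D) = j$, the number of admissible pairs $(X,Y)$ is the product
\[
  \binom{(n-|T|) - i}{s}\binom{|T| - j}{t},
\]
because $X$ may be any $s$-subset of the $(n-|T|)-i$ coordinates of $\overline{T}$ on which $D$ vanishes, and $Y$ may be any $t$-subset of the $|T|-j$ coordinates of $T$ on which $D$ vanishes, independently.

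Finally I would partition $\D_r(C)$ according to the pair of weights $(\wt_{\overline{T}}(D), \wt_T(D)) = (i,j)$, whose class sizes are by definition $A_{i,j}^{(r)}(C;T)$, and sum the contribution above over all classes. The factor $\binom{(n-|T|)-i}{s}$ forces $i \le n-|T|-s$ and $\binom{|T|-j}{t}$ forces $j \le |T|-t$, which pins down the stated summation ranges and reproduces the claimed formula exactly. I expect no serious obstacle; the one point demanding care is the support-disjointness reformulation and the correct bookkeeping of the binomial counts and their ranges.
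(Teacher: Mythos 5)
Your proposal is correct and is essentially the paper's own proof: the paper also counts the triples $(D,X,Y)$ with $D \subseteq C_T(X,Y)$ in two ways, once by fixing $(X,Y)$ to recover $Q_{s,t}^{(r)}(C;T)$ and once by fixing $D$ and grouping by the weight pair $(i,j)$ to get the binomial-weighted sum of $A_{i,j}^{(r)}(C;T)$. Your write-up is in fact slightly more explicit than the paper's, since you spell out the equivalence $D \subseteq C_T(X,Y) \iff \Supp(D) \cap (X\cup Y) = \emptyset$ and the vanishing of the binomial coefficients that pins down the summation ranges.
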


\begin{proof}
	For $T \subseteq [n]$, we will count the following set:
	\begin{multline*}
		\mathcal{B}_{s,t}^{(r)}(C;T)\\
		=
		\{
			(D;X,Y)
			\mid
			X \in \binom{\overline{T}}{s},\,
			Y \in \binom{T}{t},
			D \in \D_{r}(C),
			D \subseteq C_{T}(X,Y))
		\}
	\end{multline*}
	in two different ways.
	For each pair of $(X,Y)$ with $|X| = s$ and $|Y| = t$
	there are $B_{X,Y}^{(r)}(C;T)$ 
	tuple~$(D;X,Y)$ in $\mathcal{B}_{s,t}^{(r)}(C;T)$. 
	So, the total number of elements in the set is:
	\[
		Q_{s,t}^{(r)}(C;T)
		= 
		\sum_{X \in \binom{\overline{T}}{s}}\,
		\sum_{Y \in \binom{T}{t}} 
		B_{X,Y}^{(r)}(C;T).
	\]
	On the other hand, let $D \in \D_{r}(C)$ with
	$\wt_{[n]\backslash T}(D) = i$
	and~$\wt_{T}(D) = j$.
	We can find such~$D$ in $A_{i,j}^{(r)}(C;T)$ ways.
	So, the number of tuple~$(D;X,Y)$ in $\mathcal{B}_{s,t}^{(r)}(C;T)$
	such that $D \subseteq C_{T}(X,Y)$,
	is as follows:
	\[
		\binom{n-|T|-i}{s}\,
		\binom{|T|-j}{t}\,
		A_{i,j}^{(r)}(C;T).
	\]
	Summing the above expression 
	over all $i$ ($0 \leq i \leq n-|T|$) 
	and $j$ ($0 \leq j \leq |T|$)
	proves the given identity.	
\end{proof}

\begin{rem}
	For~$r = 0$, we have from the above proposition that 
	\[
		Q_{s,t}^{(0)}(C;T) 
		= 
		\binom{n-|T|}{s} 
		\binom{|T|}{t}.
	\]
	So, in general, we have from Lemma~\ref{Rem:BXrC} that 
	$\ell_{T}(X,Y) = 0$ does that implies 
	$B_{X,Y}^{(r)}(C;T) = 0$.
	However, if $r \neq 0$ and $\ell_{T}(X,Y) = 0$
	implies $B_{X,Y}^{(r)}(C;T) = 0$ and $Q_{s,t}^{(r)}(C;T) = 0$.
\end{rem}

\noindent
Now we have the following result.
\begin{thm}\label{Thm:HigherJacReinter}
	Let $C$ be an $[n,k]$ code $C$ over $\FF_{q}$,
    and let $T \subseteq [n]$.
	Then
	\[
		J_{C,T}^{(r)}(w,z,x,y)
		= 
		\sum_{s = 0}^{n-|T|}\,
		\sum_{t = 0}^{|T|}\,
		Q_{s,t}^{(r)}(C;T)\,
		(w-z)^{t}\, z^{|T|-t}
		(x-y)^{s}\, y^{n-|T|-s}\,.
	\]
\end{thm}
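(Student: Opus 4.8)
The plan is to prove the identity by substituting the definition of $Q_{s,t}^{(r)}(C;T)$ from Proposition~\ref{Prop:Connection} into the right-hand side and then recognizing the resulting double sum as an expansion of $J_{C,T}^{(r)}(w,z,x,y)$ via the binomial theorem. First I would substitute
\[
Q_{s,t}^{(r)}(C;T)
=
\sum_{i=0}^{n-|T|-s}\,
\sum_{j=0}^{|T|-t}\,
\binom{n-|T|-i}{s}\,
\binom{|T|-j}{t}\,
A_{i,j}^{(r)}(C;T)
\]
into the right-hand side of the claimed formula. After interchanging the order of summation (moving the sums over $i$ and $j$ to the outside), the expression factors into a product of two independent pieces: one involving the variables $w,z$ and the index $t$, and the other involving $x,y$ and the index $s$. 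Concretely, I expect the right-hand side to become
\[
\sum_{i=0}^{n-|T|}\,
\sum_{j=0}^{|T|}\,
A_{i,j}^{(r)}(C;T)\,
\Biggl(\sum_{t=0}^{|T|-j}\binom{|T|-j}{t}(w-z)^{t}z^{|T|-t}\Biggr)
\Biggl(\sum_{s=0}^{n-|T|-i}\binom{n-|T|-i}{s}(x-y)^{s}y^{n-|T|-s}\Biggr),
\]
where the upper limits on the inner $s$ and $t$ sums are tightened from the binomial coefficients vanishing whenever $s > n-|T|-i$ or $t > |T|-j$.

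The key step is then to evaluate the two inner sums by the binomial theorem. For the $w,z$ piece I would pull out a factor of $z^{j}$ and write $z^{|T|-t}=z^{|T|-j}\cdot z^{j-t}$, so that the sum becomes $z^{j}\sum_{t}\binom{|T|-j}{t}(w-z)^{t}z^{|T|-j-t}=z^{j}\bigl((w-z)+z\bigr)^{|T|-j}=z^{j}w^{|T|-j}$. Symmetrically, the $x,y$ piece collapses to $y^{i}x^{n-|T|-i}$. Substituting these back yields exactly
\[
\sum_{i=0}^{n-|T|}\sum_{j=0}^{|T|}A_{i,j}^{(r)}(C;T)\,w^{|T|-j}z^{j}x^{n-|T|-i}y^{i},
\]
which is the definition of $J_{C,T}^{(r)}(w,z,x,y)$ from Definition~\ref{Def:HigherJacobi}, completing the proof.

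The main obstacle I anticipate is purely bookkeeping: carefully justifying the interchange of the finite sums and confirming that the summation ranges match up so that the binomial theorem applies cleanly. In particular I would need to check that extending the inner $t$-sum to run from $0$ to $|T|-j$ (rather than the original constraint coming from $Q_{s,t}^{(r)}$) introduces no spurious terms, which holds because $\binom{|T|-j}{t}=0$ for $t>|T|-j$; the same remark applies to the $s$-sum. Once the sums are separated and the vanishing of out-of-range binomial coefficients is noted, the algebra is routine, so I expect no genuine mathematical difficulty beyond this indexing care.
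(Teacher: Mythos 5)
Your proposal is correct and follows essentially the same route as the paper's own proof: substitute the expression for $Q_{s,t}^{(r)}(C;T)$ from Proposition~\ref{Prop:Connection} into the right-hand side, interchange the finite sums (the constraint $s+i\le n-|T|$, $t+j\le|T|$ being symmetric in the two index pairs), and collapse the two inner sums via the binomial theorem to $w^{|T|-j}z^{j}$ and $x^{n-|T|-i}y^{i}$. The bookkeeping concerns you raise are exactly the ones the paper handles implicitly, and your resolution of them is sound.
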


\begin{proof}
	By Proposition~\ref{Prop:Connection}
    and the binomial expansion of 
	$x^{n-i} = ((x-y)+y)^{n-i}$,
	we have
	\begin{align*}
		&
		\sum_{s = 0}^{n-|T|}\,
		\sum_{t = 0}^{|T|}\,
		Q_{s,t}^{(r)}(C;T)\,
		(w-z)^{t}\, z^{|T|-t}
		(x-y)^{s}\, y^{n-|T|-s}\,\\ 
		= & 
		\sum_{s = 0}^{n-|T|}\,
		\sum_{t = 0}^{|T|}\,
		\sum_{i=0}^{n-|T|-s}\,
		\sum_{j=0}^{|T|-t} \,
		\binom{n-|T|-i}{s}\,
		\binom{|T|-j}{t}\,\\\
		&\hspace{50pt}
		A_{i,j}^{(r)}(C;T)\,
		(w-z)^{t}\, z^{|T|-t}
		(x-y)^{s}\, y^{n-|T|-s}\,\\
		= & 
		\sum_{i=0}^{n-|T|}\,
		\sum_{j=0}^{|T|}\,
		A_{i,j}^{(r)}(C;T)\,
		\sum_{s = 0}^{n-|T|-i}\,
		\sum_{t = 0}^{|T|-j}\, 
		\binom{n-|T|-i}{s}\,
		\binom{|T|-j}{t}\,\\
		&\hspace{50pt}
		(w-z)^{t}\, z^{|T|-j-t}\, 
		(x-y)^{s}\, y^{n-|T|-i-s}\,
		z^{j}\,y^{i}\\
		= & 
		\sum_{i=0}^{n-|T|}\,
		\sum_{j=0}^{|T|}\,
		A_{i,j}^{(r)}(C;T)\,
		\left(
			\sum_{s = 0}^{n-|T|-i}\,
			\binom{n-|T|-i}{s}\,
			(x-y)^{s}\, y^{n-|T|-i-s}\,
		\right)\\
		&\hspace{50pt}
		\left(
			\sum_{t = 0}^{|T|-j}\,
			\binom{|T|-j}{t}\,
			(w-z)^{t}\, z^{|T|-j-t}\,
		\right)
		z^{j}\,y^{i}\\
		= &  
		\sum_{i=0}^{n-|T|}\,
		\sum_{j=0}^{|T|}\,
		A_{i,j}^{(r)}(C;T)\,
		w^{|T|-j}\,
		z^{j}\,
		x^{n-|T|-i}\,
		y^{i}\\
		= & \,
		J_{C,T}^{(r)}(w,z,x,y).\qedhere
	\end{align*}
\end{proof}

\subsection{Extended Jacobi polynomials}\label{Sec:ExtenJac}

Let~$m$ be a positive integer. 
Then any~$[n,k]$ code~$C$ over $\FF_{q}$
can be extended to a linear code $C{[q^m]}:=C\otimes \FF_{q^{m}}$
over~$\FF_{q^{m}}$ by considering all the $\FF_{q^{m}}$-linear combinations
of the codewords in~$C$. 
We call~$C{[q^m]}$ the $\FF_{q^{m}}$-\emph{extension code} of~$C$. 
Clearly, $\dim_{\FF_{q}} C(X) = \dim_{\FF_{q^{m}}} C[q^m](X)$ 
for all $X \subseteq [n]$ (see~\cite[{p.~34}]{JP2013}).

\begin{df}
	Let $C$ be an $[n,k]$ code $C$ over $\FF_{q}$,
    and let $T \subseteq [n]$.
	Then the \emph{$q^m$-extended Jacobi polynomial}
	of~$C$ associated to~$T$ can be defined as follows:
	\[
		J_{C,T}(w,z,x,y;q^m)
		:=
		\sum_{i=0}^{n-|T|}\,
		\sum_{j=0}^{|T|}\,
		A_{i,j}^{q^m}(C;T)\,
		w^{|T|-j}\, z^{j}\,
		x^{n-|T|-i}\, y^{i},
	\]
	where 
	\[
		A_{i,j}^{q^m}(C;T)
		:=
		\#
		\{
			\bm{u} \in C[q^m]
			\mid
			\wt_{\overline{T}}(\bm{u}) = i
			\text{ and }
			\wt_{T}(\bm{u}) = j		
		\}.
	\]
\end{df}


The following theorem gives the Jacobi analogue
of MacWilliams identity for $\FF_{q^{m}}$-extended weight enumerators
of codes over~$\FF_{q}$.
Since the proof of the theorem
is straightforward, we omit it.

\begin{thm}[MacWilliams type identity]\label{Thm:JacExtMacWilliams.} 
	Let $C$ be an $[n,k]$ code over $\FF_{q}$,
	and let $T \subseteq [n]$.
	Then
	\begin{multline*}
		J_{C^{\perp},T}
		(w,z,x,y;q^m)\\
		= 
		q^{-km} 
		J_{C,T}
		\left( 
			{w+(q^{m}-1)z},\, 
			{w-z},\,
			{x+(q^{m}-1)y},\, 
			{x-y};\,
			q^m
		\right).
	\end{multline*}
\end{thm}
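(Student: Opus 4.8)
The plan is to reduce the extended MacWilliams identity for the Jacobi polynomial $J_{C,T}(w,z,x,y;q^m)$ to the ordinary (classical) Jacobi MacWilliams identity applied to the extension code $C[q^m]$ over $\FF_{q^m}$. The key observation is that, by definition, $J_{C,T}(w,z,x,y;q^m)$ is nothing other than the ordinary Jacobi polynomial $J_{C[q^m],T}(w,z,x,y)$ of the extension code $C[q^m]$, since both count codewords $\bm{u}\in C[q^m]$ by their weights $\wt_T(\bm{u})$ and $\wt_{\overline T}(\bm{u})$. Therefore the statement should follow by invoking Ozeki's Jacobi MacWilliams identity (the $r=1$-style scalar version stated in~\cite{Ozeki}) for the single code $C[q^m]$ over the field $\FF_{q^m}$.

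First I would record that $J_{C,T}(w,z,x,y;q^m)=J_{C[q^m],T}(w,z,x,y)$, where the right-hand side is the Jacobi polynomial of Definition~\ref{Def:JacOne} computed over the field $\FF_{q^m}$ (so that the alphabet size $q$ there is replaced by $q^m$). Next I would apply the classical Jacobi MacWilliams transformation for a code over a field of order $Q$, which sends
\[
J_{C,T}(w,z,x,y)\longmapsto Q^{-\dim C}\,J_{C,T}\bigl(w+(Q-1)z,\,w-z,\,x+(Q-1)y,\,x-y\bigr),
\]
and produces $J_{C^\perp,T}$ on the left. Here I would set $Q=q^m$ and use the two facts that $\dim_{\FF_{q^m}}C[q^m]=\dim_{\FF_q}C=k$ (so the scalar is $q^{-km}$) and that the dual commutes with extension, i.e. $\bigl(C[q^m]\bigr)^\perp=C^\perp[q^m]$. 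The latter lets me rewrite $J_{(C[q^m])^\perp,T}=J_{C^\perp[q^m],T}=J_{C^\perp,T}(w,z,x,y;q^m)$, yielding exactly the claimed identity.

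The substantive content is therefore entirely in these two structural facts rather than in any computation. The dimension equality is already quoted in the excerpt as $\dim_{\FF_q}C(X)=\dim_{\FF_{q^m}}C[q^m](X)$, specialized to $X=\emptyset$. The step I expect to require the most care is justifying $\bigl(C[q^m]\bigr)^\perp=C^\perp[q^m]$: one must check that the $\FF_{q^m}$-orthogonal complement of the $\FF_{q^m}$-span of $C$ is precisely the $\FF_{q^m}$-span of the $\FF_q$-orthogonal complement of $C$. This follows from a dimension count together with the inclusion $C^\perp[q^m]\subseteq\bigl(C[q^m]\bigr)^\perp$ (which holds because orthogonality of $\FF_q$-vectors persists under $\FF_{q^m}$-linear combinations), and it is exactly the point the authors are alluding to when they call the proof \emph{straightforward}. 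Once these identifications are in place, the transformation of the variables $w,z,x,y$ is purely formal, and the theorem drops out.
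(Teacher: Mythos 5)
Your proof is correct. The paper itself gives no argument for this theorem --- it states only that the proof is straightforward and omits it --- so there is nothing to compare against line by line; your reduction is exactly the ``straightforward'' argument the authors are alluding to: observe that $J_{C,T}(w,z,x,y;q^m)$ is by definition the ordinary Jacobi polynomial $J_{C[q^m],T}(w,z,x,y)$ of the extension code over the alphabet $\FF_{q^m}$, apply the classical Jacobi MacWilliams identity of Ozeki~\cite{Ozeki} with field size $Q=q^m$ and $|C[q^m]|=q^{km}$, and translate back using $(C[q^m])^{\perp}=C^{\perp}[q^m]$. The only step requiring real care is the commutation of duality with scalar extension, and your treatment of it is complete: bilinearity of the inner product gives $C^{\perp}[q^m]\subseteq (C[q^m])^{\perp}$, and both sides have $\FF_{q^m}$-dimension $n-k$ (the latter by the dimension-preservation fact $\dim_{\FF_q}C(X)=\dim_{\FF_{q^m}}C[q^m](X)$ quoted in the paper, applied to $C^{\perp}$ with $X=\emptyset$), forcing equality.
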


\begin{df}
	Let $C$ be an $\FF_{q}$-linear code of length~$n$ and
	let $T \subseteq [n]$. 
	For any subsets $X \subseteq \overline{T}$ 
	and $Y \subseteq T$ and positive integer~$m$,
	we define
	\begin{align*}
		B_{X,Y}^{q^m}(C;T)
		& := 
		(q^m)^{\ell_{T}(X,Y)},\\
		Q_{s,t}^{q^m}(C;T)
		& := 
		\sum_{X \in \binom{\overline{T}}{s}}\,
		\sum_{Y \in \binom{T}{t}} 
		B_{X,Y}^{q^m}(C;T).
	\end{align*} 
	Note that $B_{X,Y}^{q^m}(C;T)$ denotes the number of codewords in
	$(C[\FF_{q}])_{T}(X,Y)$.
\end{df}

\begin{prop}\label{Prop:JacTQAConnection}
	The relation between $Q_{s,t}^{q^m}(C;T)$ and $A_{i,j}^{q^m}(C;T)$ 
	is as follows:
	\[
		Q_{s,t}^{q^m}(C;T) 
		= 
		\sum_{i=0}^{n-|T|-s}\,
		\sum_{j=0}^{|T|-t} \,
		\binom{n-|T|-i}{s}\,
		\binom{|T|-j}{t}\,
		A_{i,j}^{q^m}(C;T).
	\]
\end{prop}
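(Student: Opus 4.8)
The plan is to mirror the double-counting argument used in the proof of Proposition~\ref{Prop:Connection}, with codewords of the extension code $C[q^m]$ playing the role that $r$-dimensional subcodes played there. Concretely, I would fix $s$ and $t$ and count, in two ways, the set of triples
\[
	\mathcal{B}_{s,t}^{q^m}(C;T)
	=
	\{
		(\bm{u};X,Y)
		\mid
		X \in \binom{\overline{T}}{s},\,
		Y \in \binom{T}{t},\,
		\bm{u} \in C[q^m],\,
		\bm{u} \in (C[q^m])_{T}(X,Y)
	\},
\]
where $(C[q^m])_{T}(X,Y)$ denotes the set of $\bm{u} \in C[q^m]$ with $u_i = 0$ for all $i \in X \cup Y$.

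First I would count by fixing the pair $(X,Y)$. For each such pair the admissible codewords are exactly the elements of $(C[q^m])_{T}(X,Y)$, whose number is $B_{X,Y}^{q^m}(C;T)$ by definition. Summing over $X \in \binom{\overline{T}}{s}$ and $Y \in \binom{T}{t}$ then gives $|\mathcal{B}_{s,t}^{q^m}(C;T)| = Q_{s,t}^{q^m}(C;T)$, recovering the left-hand side. Second I would count by fixing the codeword $\bm{u}$. If $\wt_{\overline{T}}(\bm{u}) = i$ and $\wt_{T}(\bm{u}) = j$, then $\bm{u}$ vanishes on exactly $n-|T|-i$ coordinates of $\overline{T}$ and on exactly $|T|-j$ coordinates of $T$; the admissible $X$ are the $s$-subsets of the former zero-set and the admissible $Y$ are the $t$-subsets of the latter, producing $\binom{n-|T|-i}{s}\binom{|T|-j}{t}$ compatible pairs. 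Since there are $A_{i,j}^{q^m}(C;T)$ codewords with these two weights, summing over all $i$ and $j$ gives the right-hand side, and equating the two counts yields the identity.

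The one point requiring care — and the step I expect to be the main (if modest) obstacle — is the identification $|(C[q^m])_{T}(X,Y)| = B_{X,Y}^{q^m}(C;T) = (q^m)^{\ell_T(X,Y)}$, because $\ell_T(X,Y) = \dim_{\FF_q} C_T(X,Y)$ is computed over the \emph{base} field while the codewords being counted live over $\FF_{q^m}$. Here I would observe that $C_T(X,Y) = C(X \cup Y)$ and $(C[q^m])_T(X,Y) = C[q^m](X \cup Y)$, and then invoke the dimension-preservation fact $\dim_{\FF_q} C(X) = \dim_{\FF_{q^m}} C[q^m](X)$, valid for every $X \subseteq [n]$ (see~\cite[p.~34]{JP2013}), applied to $X \cup Y$. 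This yields $\dim_{\FF_{q^m}}(C[q^m])_T(X,Y) = \ell_T(X,Y)$, so the extension subcode contains exactly $(q^m)^{\ell_T(X,Y)}$ codewords, matching $B_{X,Y}^{q^m}(C;T)$. Once this equality is secured, the remainder of the argument is combinatorial bookkeeping identical in form to Proposition~\ref{Prop:Connection}, so I would expect to be able to state it briefly or, as the authors note for the MacWilliams-type identity, even omit the routine details.
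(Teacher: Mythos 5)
Your proposal is correct and is essentially the paper's own proof: the authors also establish the identity by counting the triples $(\bm{u};X,Y)$ with $\bm{u}\in (C[q^m])_{T}(X,Y)$ in two ways, exactly as you describe. Your extra care in justifying $|(C[q^m])_{T}(X,Y)| = (q^m)^{\ell_T(X,Y)}$ via the fact $\dim_{\FF_q} C(X) = \dim_{\FF_{q^m}} C[q^m](X)$ makes explicit a point the paper only records in the remark following the definition of $B_{X,Y}^{q^m}(C;T)$, but it is the same argument.
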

	
	\begin{proof}
		For $T \subseteq [n]$, 
		by counting the number of elements the following set:
		\[
			\{
				(\bm{u};X,Y)
				\mid
				X \in \binom{\overline{T}}{s},\,
				Y \in \binom{T}{t},
				\bm{u} \in (C[\FF_{q^{m}}])_{T}(X,Y)
			\}
		\]
		in two different ways, 
		we can prove the identity.
\end{proof}
Using the above proposition, 
we have the extended analogue of Theorem~\ref{Thm:HigherJacReinter} as follows.
Since the proof is straightforward, we leave it to the reader.

\begin{thm}\label{Thm:ExtendedReinter}
	Let $C$ be an $\FF_{q}$-linear code of length~$n$, 
	and let $T \subseteq [n]$.
	Then 
	\begin{multline*}
		J_{C,T}(w,z,x,y;q^m)\\
		= 
		\sum_{s=0}^{n-|T|}\,
		\sum_{t=0}^{|T|}\,
		Q_{s,t}^{q^m}(C;T)\,
		(w-z)^{t}\, z^{|T|-t}
		(x-y)^{s}\, y^{n-|T|-s}.
	\end{multline*}
\end{thm}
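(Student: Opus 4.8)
The plan is to mirror exactly the proof of Theorem~\ref{Thm:HigherJacReinter}, since the extended Jacobi polynomial is defined by the same combinatorial template with $A_{i,j}^{(r)}(C;T)$ replaced by $A_{i,j}^{q^m}(C;T)$ and $Q_{s,t}^{(r)}(C;T)$ replaced by $Q_{s,t}^{q^m}(C;T)$. Starting from the right-hand side, I would substitute the relation of Proposition~\ref{Prop:JacTQAConnection} for $Q_{s,t}^{q^m}(C;T)$, producing a quadruple sum over $s,t,i,j$ with the two binomial factors $\binom{n-|T|-i}{s}$ and $\binom{|T|-j}{t}$ attached to $A_{i,j}^{q^m}(C;T)$.

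The core of the argument is then a reindexing: I would interchange the order of summation so that $i$ and $j$ become the outer indices and $s,t$ the inner ones, the inner ranges becoming $0 \le s \le n-|T|-i$ and $0 \le t \le |T|-j$. After factoring the monomial $z^{j}y^{i}$ out of the inner sums, the two inner sums decouple into a product of two binomial expansions. The first collapses via $\sum_{s} \binom{n-|T|-i}{s}(x-y)^{s}y^{n-|T|-i-s} = x^{n-|T|-i}$ and the second via $\sum_{t}\binom{|T|-j}{t}(w-z)^{t}z^{|T|-j-t} = w^{|T|-j}$. What remains is precisely $\sum_{i}\sum_{j} A_{i,j}^{q^m}(C;T)\, w^{|T|-j}z^{j}x^{n-|T|-i}y^{i}$, which is the definition of $J_{C,T}(w,z,x,y;q^m)$.

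There is essentially no obstacle here: the combinatorial identity $B_{X,Y}^{q^m}(C;T) = (q^m)^{\ell_T(X,Y)}$ counting codewords of the extension code plays the role that Lemma~\ref{Rem:BXrC} played in the higher case, but it is never invoked directly in this computation since Proposition~\ref{Prop:JacTQAConnection} already packages the needed counting. The only point demanding care is the bookkeeping of the summation ranges when swapping the order of $(s,t)$ and $(i,j)$, ensuring that the binomial coefficients vanish outside the stated ranges so that extending the outer sums to the full ranges $0 \le i \le n-|T|$, $0 \le j \le |T|$ is harmless.

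Because every step is a term-by-term translation of the proof of Theorem~\ref{Thm:HigherJacReinter}, with the two decoupled binomial expansions of $x^{n-|T|-i} = ((x-y)+y)^{n-|T|-i}$ and $w^{|T|-j} = ((w-z)+z)^{|T|-j}$ carrying the computation, the argument is purely formal and presents no genuine difficulty—which is exactly why the authors say it may be left to the reader.
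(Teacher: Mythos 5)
Your proposal is correct and takes exactly the route the paper intends: the paper omits this proof precisely because it is the term-by-term translation of the proof of Theorem~\ref{Thm:HigherJacReinter}, substituting Proposition~\ref{Prop:JacTQAConnection} for $Q_{s,t}^{q^m}(C;T)$, swapping the summation order, and collapsing the two decoupled binomial expansions $((x-y)+y)^{n-|T|-i}=x^{n-|T|-i}$ and $((w-z)+z)^{|T|-j}=w^{|T|-j}$. Your bookkeeping remarks about the summation ranges and the fact that $B_{X,Y}^{q^m}(C;T)$ need not be invoked directly are both accurate.
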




\begin{prop}\label{Prop:ExtendedRephrase}
	We can {express} $J_{C,T}(w,z,x,y;q^m)$ as follows:
	\begin{multline*}
		J_{C,T}(w,z,x,y;q^m)
		= 
		\sum_{s=0}^{n-|T|}\,
		\sum_{t=0}^{|T|}\,
		\left(
			\sum_{X \in \binom{\overline{T}}{s}}\,
			\sum_{Y \in \binom{T}{t}} 
			(q^m)^{\ell_{T}(X,Y)}
		\right)\\
		(w-z)^{t}\, z^{|T|-t}\,
		(x-y)^{s}\, y^{n-|T|-s}.
	\end{multline*}
\end{prop}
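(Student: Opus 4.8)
The plan is to obtain this expression by direct substitution of definitions into Theorem~\ref{Thm:ExtendedReinter}. That theorem already expresses $J_{C,T}(w,z,x,y;q^m)$ as the double sum
\[
\sum_{s=0}^{n-|T|}\sum_{t=0}^{|T|} Q_{s,t}^{q^m}(C;T)\,(w-z)^{t} z^{|T|-t}(x-y)^{s} y^{n-|T|-s},
\]
so the only remaining task is to unfold the coefficient $Q_{s,t}^{q^m}(C;T)$ into the form appearing in the statement.

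First I would recall the definition $Q_{s,t}^{q^m}(C;T) = \sum_{X \in \binom{\overline{T}}{s}}\sum_{Y \in \binom{T}{t}} B_{X,Y}^{q^m}(C;T)$, and then substitute the defining relation $B_{X,Y}^{q^m}(C;T) = (q^m)^{\ell_{T}(X,Y)}$. Inserting the resulting inner double sum into each coefficient of Theorem~\ref{Thm:ExtendedReinter} immediately yields the displayed identity, since the monomials $(w-z)^{t} z^{|T|-t}(x-y)^{s} y^{n-|T|-s}$ are independent of $X$ and $Y$ and may therefore be kept outside the inner sum over $X$ and $Y$.

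Because each step is a direct replacement of a quantity by its definition, there is no genuine obstacle here; the argument rests entirely on Theorem~\ref{Thm:ExtendedReinter} together with the two defining equations for $Q_{s,t}^{q^m}(C;T)$ and $B_{X,Y}^{q^m}(C;T)$. The only care needed is purely notational, namely to keep the ranges of $s$ and $t$ aligned with the summation bounds of Theorem~\ref{Thm:ExtendedReinter} and to verify that the four exponents match on both sides. In particular, no new combinatorial identity, polarization step, or inductive argument is required beyond what is already encoded in Theorem~\ref{Thm:ExtendedReinter}, which is why the proof can be stated in a single line of substitutions.
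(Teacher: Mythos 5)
Your proof is correct and matches the paper's intent exactly: the paper states Proposition~\ref{Prop:ExtendedRephrase} without proof precisely because it is the immediate substitution of the definitions of $Q_{s,t}^{q^m}(C;T)$ and $B_{X,Y}^{q^m}(C;T)$ into Theorem~\ref{Thm:ExtendedReinter}, which is what you do. No further comment is needed.
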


\section{The connections}\label{Sec:Connection}

In this section, we establish two-way relationships between 
the Jacobi version of two weight polynomials, 
namely extended weight enumerators and higher weight enumerators.
Each of these relationships plays an important role in the development of the 
subsequent sections.

Let $\Mat(m\times n, \FF_{q})$ be the $\FF_{q}$-linear space 
of $m \times n$ matrices with entries in~$\FF_{q}$.
Then for an~$\FF_{q}$-linear code~$C$ of length~$n$,
$S_{m,n}(C)$ denotes the linear subspace of 
$\Mat(m\times n, \FF_{q})$ such that rows of each matrix are in~$C$. 
Then by~\cite[Proposition 5.27]{JP2013}, 
there is an isomorphism
from $C{[q^m]}$ into~$S_{m,n}(C)$.
For a detailed discussion, we refer the reader to~\cite{JP2013, Simonis1993}.

\begin{lem}\label{Lem:wtTuwtTD}
	Let $\bm{u} \in C[\FF_{q^{m}}]$ and
	$M \in S_{m,n}(C)$ be the corresponding~$m\times n$
	matrix under a given isomorphism. 
	Let $D_{M} \subseteq C$ be the subcode generated by
	the rows of $M$. Then for any $T \subseteq [n]$,
	$\wt_{T}(\bm{u}) = \wt_{T}(D_{M})$.
\end{lem}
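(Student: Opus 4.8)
The plan is to prove the stronger coordinate-wise statement that $\supp(\bm{u}) = \Supp(D_M)$ as subsets of $[n]$; restricting to $T$ and taking cardinalities then yields the claim. The whole argument rests on making the isomorphism $C[\FF_{q^{m}}] \to S_{m,n}(C)$ explicit. First I would fix an $\FF_{q}$-basis $\alpha_{1},\ldots,\alpha_{m}$ of $\FF_{q^{m}}$, so that each coordinate of $\bm{u}$ is written uniquely as $u_{i} = \sum_{l=1}^{m} \alpha_{l} M_{l,i}$ with $M_{l,i} \in \FF_{q}$. The matrix $M = (M_{l,i})$ is then exactly the image of $\bm{u}$ under the isomorphism, and the hypothesis $\bm{u} \in C[\FF_{q^{m}}]$ guarantees that each row of $M$ lies in $C$, so that $D_{M}$ is well defined as the $\FF_{q}$-span of these rows.

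The key step is the observation that, because $\alpha_{1},\ldots,\alpha_{m}$ are linearly independent over $\FF_{q}$ while the entries $M_{l,i}$ lie in $\FF_{q}$, we have $u_{i} = 0$ if and only if $M_{l,i} = 0$ for every $l$; equivalently, $i \notin \supp(\bm{u})$ precisely when the $i$-th column of $M$ vanishes. This is the one place where the specific form of the isomorphism and the $\FF_{q}$-independence of the basis are genuinely used, and I expect it to be the crux of the proof.

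Next I would identify $\Supp(D_{M})$. Since $D_{M}$ is spanned over $\FF_{q}$ by the rows of $M$, its support is the union of the supports of those rows: the $i$-th coordinate lies in $\Supp(D_{M})$ exactly when some row of $M$ is nonzero in position $i$, that is, exactly when the $i$-th column of $M$ is nonzero. Combining this with the previous step gives $i \in \supp(\bm{u}) \iff i \in \Supp(D_{M})$, hence $\supp(\bm{u}) = \Supp(D_{M})$. Finally, intersecting both sides with $T$ gives $\supp_{T}(\bm{u}) = \Supp_{T}(D_{M})$, and taking cardinalities yields $\wt_{T}(\bm{u}) = \wt_{T}(D_{M})$, as required.
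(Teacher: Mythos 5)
Your proposal is correct and follows essentially the same route as the paper: express each coordinate $u_i$ uniquely in an $\FF_q$-basis of $\FF_{q^m}$, observe that $u_i=0$ if and only if the $i$-th column of $M$ vanishes, identify $\Supp(D_M)$ with the set of nonzero columns, and restrict to $T$. The only (cosmetic) difference is that you work with an arbitrary $\FF_q$-basis $\alpha_1,\ldots,\alpha_m$, whereas the paper uses the power basis $(1,\alpha,\ldots,\alpha^{m-1})$; your version is, if anything, slightly cleaner, and you also make explicit the easy fact that the support of the span of the rows equals the union of the row supports, which the paper leaves implicit.
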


\begin{proof}
	Let $\alpha$ be a primitive $m$-th root of unity in $\FF_{q^m}$. 
	Then we can write an element of~$\FF_{q^m}$ 
	in a unique way on the basis $(1, \alpha, \alpha^2, \ldots, \alpha^{m-1})$
	with coefficients in $\FF_q$.
	Let $j$-th coordinate $u_{j}$ of $\bm{u}$ is zero, 
	then the $j$-th column of~$M$ consists of only zeros, 
	because the representation of~$u_{j}$ on the basis
	$(1, \alpha, \alpha^2, \ldots, \alpha^{m-1})$ is unique.
	On the other hand, if the $j$-th column of~$M$
	consists of all zeros, then $u_{j}$ is also zero.
	This implies for any $T\subseteq [n]$,
	$\supp_{T}(\bm{u}) = \Supp_{T}(D_{M})$.
	Hence $\wt_{T}(\bm{u}) = \wt_{T}(D_{M})$.
\end{proof}

\begin{thm}\label{Thm:ExtendedtoHigher}
	Let $C$ be an $[n,k]$ code over~$\FF_{q}$ and 
    let $T \subseteq [n]$. Then for any positive integer~$m$, 
    we have
	\[
		A_{i,j}^{q^m}(C;T)
		=
		\sum_{r=0}^{k}\,
		[m,r]_{q}\,
		A_{i,j}^{(r)}(C;T)\,.
	\]
	More specifically, we can write
	\[
		J_{C,T}
		(w,z,x,y;q^m)
		=
		\sum_{r=0}^{k}\,
		[m,r]_{q}\,
		J_{C,T}^{(r)}
		(w,z,x,y)\,.
	\]
\end{thm}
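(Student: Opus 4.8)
The plan is to establish the numerical identity for the coefficients $A_{i,j}^{q^m}(C;T)$ first, and then deduce the polynomial identity by substituting into the definitions of $J_{C,T}(w,z,x,y;q^m)$ and $J_{C,T}^{(r)}(w,z,x,y)$. The whole argument rests on translating the extension-code count into a count of matrices and then partitioning those matrices by the subcode their rows generate.

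First I would invoke the isomorphism from $C[q^m]$ into $S_{m,n}(C)$ of~\cite[Proposition 5.27]{JP2013} to rewrite $A_{i,j}^{q^m}(C;T)$ as a count of matrices. Explicitly, $A_{i,j}^{q^m}(C;T)$ equals the number of $M \in S_{m,n}(C)$ whose associated codeword $\bm{u} \in C[q^m]$ satisfies $\wt_{\overline{T}}(\bm{u}) = i$ and $\wt_{T}(\bm{u}) = j$. By Lemma~\ref{Lem:wtTuwtTD}, these weights coincide with $\wt_{\overline{T}}(D_M)$ and $\wt_{T}(D_M)$, where $D_M \subseteq C$ is the subcode generated by the rows of $M$. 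Hence $A_{i,j}^{q^m}(C;T)$ is the number of matrices $M \in S_{m,n}(C)$ whose row-span subcode has weight $i$ on $\overline{T}$ and weight $j$ on $T$.

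Next I would partition the matrices of $S_{m,n}(C)$ according to their row-span subcode. For a fixed $D \in \D_{r}(C)$, the matrices $M$ with $D_M = D$ are exactly those whose $m$ rows are vectors of $D$ that together span $D$. Identifying $D$ with $\FF_{q}^{r}$, this is the number of ordered $m$-tuples of vectors in $\FF_{q}^{r}$ spanning $\FF_{q}^{r}$, equivalently the number of $m \times r$ matrices of rank $r$ over $\FF_{q}$. Counting column by column, each new column avoiding the span of the previous ones, gives $\prod_{i=0}^{r-1}(q^{m}-q^{i}) = [m,r]_{q}$. This combinatorial count is the crux of the proof; note that it automatically vanishes for $r > m$ and equals $1$ for $r = 0$, consistent with the zero subcode $M = \mathbf{0}$.

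Grouping the matrices by $D$ and using that $\dim D \leq k$ then yields
\[
	A_{i,j}^{q^m}(C;T)
	=
	\sum_{r=0}^{k} [m,r]_{q}\,
	\#\{D \in \D_{r}(C) \mid \wt_{\overline{T}}(D) = i,\ \wt_{T}(D) = j\}
	=
	\sum_{r=0}^{k} [m,r]_{q}\, A_{i,j}^{(r)}(C;T),
\]
where the terms with $r > m$ drop out since $[m,r]_{q} = 0$. Finally, substituting this into the definition of $J_{C,T}(w,z,x,y;q^m)$ and interchanging the order of summation over $(i,j)$ and $r$ recovers $\sum_{r=0}^{k} [m,r]_{q}\, J_{C,T}^{(r)}(w,z,x,y)$. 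I expect the main obstacle to be the careful verification that the number of matrices in $S_{m,n}(C)$ with a prescribed $r$-dimensional row-span is $[m,r]_{q}$; the remaining steps are routine bookkeeping with the isomorphism and Lemma~\ref{Lem:wtTuwtTD}.
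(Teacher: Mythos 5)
Your proof is correct and follows essentially the same route as the paper: translate extension codewords into matrices in $S_{m,n}(C)$ via the isomorphism of Jurrius--Pellikaan, use Lemma~\ref{Lem:wtTuwtTD} to identify the weights on $T$ and $\overline{T}$, and partition the matrices by their row-span subcode, grouped by rank. You are in fact more explicit than the paper at the crucial step, spelling out that the number of matrices in $S_{m,n}(C)$ with a prescribed $r$-dimensional row span is $\prod_{i=0}^{r-1}(q^{m}-q^{i}) = [m,r]_{q}$, a count the paper leaves implicit when passing from the sum over rank-$r$ matrices to the final formula.
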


\begin{proof}
	Let $\bm{u} \in C{[q^m]}$ and $M \in S_{m,n}(C)$ be 
	the corresponding $m\times n$ matrix under a given 
	isomorphism $C[q^m] \to S_{m,n}(C)$. 
	Let $D_{M} \subseteq C$ be the subcode generated by	the rows of~$M$. 
	Then $\supp_{T}(\bm{u}) = \Supp_{T}(D_{M})$, 
	and hence by Lemma~\ref{Lem:wtTuwtTD}$, \wt_{T}(\bm{u}) = \wt_{T}(D_{M})$. 
    Therefore,
	\begin{align*}
		A_{i,j}^{q^m}&(C;T)\\
		& =
		\#
		\{
			\bm{u} \in C[\FF_{q^{m}}]
			\mid
			\wt_{[n]\backslash T}(\bm{u}) = i,
			\wt_{T}(\bm{u}) = j
		\}\\
		& =
		\sum_{M \in S_{m,n}(C)}
		\#
		\{
		D_{M} \subseteq C
		\mid
		\wt_{\overline{T}}(D_{M}) = i,
		\wt_{T}(D_{M}) = j
		\}\\
		& =
		\sum_{r = 0}^{k}\,
		\sum_{\substack{M \in S_{m,n}(C),\\ \mathrm{rk}({M}) = r}}\,
		\#
		\{
		D_{M} \in \D_{r}(C)
		\mid
		\wt_{\overline{T}}(D_{M}) = i,
		\wt_{T}(D_{M}) = j
		\}\\
		& =
		\sum_{r=0}^{k}\,
		[m,r]_{q}\,
		A_{i,j}^{(r)}(C;T)\,.\qedhere
	\end{align*}
\end{proof}

We can also express the higher Jacobi polynomials
in terms of the extended Jacobi polynomials.
The following result gives such correspondence.

\begin{thm}\label{Thm:HigherToExtended}
	Let $C$ be an $[n,k]$ code over~$\FF_{q}$ and
    let $T \subseteq [n]$. 
    Then for any integer~$r$ such that $0 \leq r \leq k$, 
    we have the following relation:
	\[
		J_{C,T}^{(r)}
		(w,z,x,y)
		=
		\dfrac{1}{[r]_{q}}
		\sum_{j=0}^{r}
		{\bbinom{r}{j}}_{q}
		(-1)^{r-j}
		q^{\binom{r-j}{2}}
		J_{C,T}
		(w,z,x,y;q^j).
	\]
\end{thm}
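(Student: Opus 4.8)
The plan is to invert the relation established in Theorem~\ref{Thm:ExtendedtoHigher}, which expresses the extended Jacobi polynomials as an explicit triangular combination of the higher Jacobi polynomials with coefficients $[m,r]_q$. Concretely, Theorem~\ref{Thm:ExtendedtoHigher} gives
\[
	J_{C,T}(w,z,x,y;q^m)
	=
	\sum_{r=0}^{k}\,
	[m,r]_{q}\,
	J_{C,T}^{(r)}(w,z,x,y),
\]
and the present statement simply solves this system for a fixed $J_{C,T}^{(r)}$ by running $m$ over the values $0,1,\ldots,r$. The key algebraic ingredient is Lemma~\ref{Rem:TBritz}, which supplies the expansion $[a,b]_q=\sum_{i=0}^b \bbinom{b}{i}_q(-1)^{b-i}q^{\binom{b-i}{2}}(q^a)^i$; this is precisely the identity needed to untangle the coefficients $[m,r]_q=[m,m]_q$ appearing on the right-hand side.

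First I would substitute the formula from Theorem~\ref{Thm:ExtendedtoHigher} into the right-hand side of the claimed identity, replacing each $J_{C,T}(w,z,x,y;q^j)$ by $\sum_{s=0}^k [j,s]_q\,J_{C,T}^{(s)}(w,z,x,y)$. After interchanging the order of summation, the coefficient of $J_{C,T}^{(s)}(w,z,x,y)$ becomes
\[
	\frac{1}{[r]_q}\sum_{j=0}^r \bbinom{r}{j}_q(-1)^{r-j}q^{\binom{r-j}{2}}[j,s]_q.
\]
The goal is then to show that this inner sum equals $[r]_q$ when $s=r$ and vanishes otherwise, so that only the $s=r$ term survives and the right-hand side collapses to $J_{C,T}^{(r)}(w,z,x,y)$. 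This reduces the whole theorem to a purely combinatorial identity in the $q$-analogue notation, independent of the code $C$ or the set $T$.

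To verify that inner sum I would apply Lemma~\ref{Rem:TBritz} to $[j,s]_q=\sum_{i=0}^s\bbinom{s}{i}_q(-1)^{s-i}q^{\binom{s-i}{2}}(q^j)^i$, substitute, and again swap summation order so that the sum over $j$ becomes $\sum_{j=0}^r \bbinom{r}{j}_q(-1)^{r-j}q^{\binom{r-j}{2}}(q^i)^j$. By Lemma~\ref{Rem:TBritz} read in the other direction, this sum over $j$ is exactly $[r,r]_q=[r]_q$ evaluated at base $q^i$, namely $[i,r]_q$ in the appropriate reading; the crucial point is that $\bbinom{r}{j}_q$ and the sign-and-power weights are the Gaussian binomial inversion kernel. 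The resulting expression $[i,r]_q$ vanishes whenever $i<r$ because $\bbinom{r}{i}_q$-type products degenerate, and the only surviving contribution forces $i=s=r$, yielding the desired $[r]_q$. I would present this as a clean two-step application of the single Lemma, emphasizing that the sign $(-1)^{r-j}q^{\binom{r-j}{2}}$ weighting is self-inverse under Gaussian binomial convolution.

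The main obstacle I anticipate is bookkeeping rather than conceptual depth: one must be careful about the interplay between the two different ``bases'' $q^i$ and $q^j$ when Lemma~\ref{Rem:TBritz} is applied twice, and about correctly identifying which Gaussian-binomial orthogonality relation forces the off-diagonal terms to vanish. In particular, checking that the double application genuinely produces a Kronecker delta $\delta_{s,r}$ (times $[r]_q$), and not some spurious surviving lower-order terms, requires verifying the standard $q$-binomial inversion identity $\sum_{j}\bbinom{r}{j}_q\bbinom{j}{s}_q(-1)^{r-j}q^{\binom{r-j}{2}}=\delta_{r,s}$. If this inversion lemma is not already available in the cited references, I would either invoke it as a well-known fact about Gaussian binomials or insert a short inductive justification; everything else is routine rearrangement of finite sums.
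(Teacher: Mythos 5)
Your proposal is correct in substance but takes a genuinely different route from the paper. The paper never invokes Theorem~\ref{Thm:ExtendedtoHigher}: it proves the identity directly by expanding $J_{C,T}^{(r)}$ through Theorem~\ref{Thm:HigherJacReinter} into the subset sums $Q_{s,t}^{(r)}(C;T)=\sum_{X,Y}B_{X,Y}^{(r)}(C;T)$, replacing $B_{X,Y}^{(r)}(C;T)$ by ${\bbinom{\ell_T(X,Y)}{r}}_q=[\ell_T(X,Y),r]_q/[r]_q$ (Lemma~\ref{Rem:BXrC}), applying Lemma~\ref{Rem:TBritz} once to the exponent $\ell_T(X,Y)$, and then recognizing $\sum_{X,Y}(q^{j})^{\ell_T(X,Y)}$ as exactly the coefficients of $J_{C,T}(w,z,x,y;q^j)$ via Proposition~\ref{Prop:ExtendedRephrase}. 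You instead treat Theorem~\ref{Thm:ExtendedtoHigher} as a triangular linear system in the $J_{C,T}^{(s)}$ and invert it with the Gaussian-binomial inversion kernel. This is legitimate --- there is no circularity, since Theorem~\ref{Thm:ExtendedtoHigher} is proved independently via the rank stratification of $S_{m,n}(C)$ --- and it makes explicit something the paper's computation leaves implicit, namely that the two theorems are exact inverses of one another. What the paper's route buys is self-containedness: a single application of Lemma~\ref{Rem:TBritz} and no orthogonality or inversion identity at all.

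Two repairs are needed in your verification of the kernel identity. First, your double-expansion argument ends with ``the only surviving contribution forces $i=s=r$''; that conclusion is unjustified when $s>r$, where every $i$ with $r\le i\le s$ survives the vanishing criterion $[i,r]_q=0$ for $i<r$. But that case needs no expansion at all: for $j\le r<s$, the product $[j,s]_q=\prod_{i=0}^{s-1}(q^j-q^i)$ contains the factor $q^j-q^j=0$, so every summand of the inner sum vanishes term by term. (Cleanest is to write $[j,s]_q={\bbinom{j}{s}}_q[s]_q$, after which the required statement is precisely your quoted inversion identity $\sum_{j}{\bbinom{r}{j}}_q{\bbinom{j}{s}}_q(-1)^{r-j}q^{\binom{r-j}{2}}=\delta_{r,s}$, valid uniformly in $s$.) Second, Theorem~\ref{Thm:ExtendedtoHigher} is stated only for positive integers $m$, while your sum includes $j=0$; you must fix the convention $J_{C,T}(w,z,x,y;q^0)=w^{|T|}x^{n-|T|}$ (the formula of Proposition~\ref{Prop:ExtendedRephrase} evaluated at $q^m=1$) and note that the $m=0$ case of Theorem~\ref{Thm:ExtendedtoHigher} then holds, since $[0,r]_q=0$ for all $r\ge 1$, so its right-hand side collapses to $J_{C,T}^{(0)}$. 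With these two points patched, your argument is complete.
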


\begin{proof}
	By Theorem~\ref{Thm:HigherJacReinter}, Lemmas~\ref{Rem:TBritz} and \ref{Rem:BXrC}  
    and Proposition~\ref{Prop:ExtendedRephrase}, 
	\begin{align*}
		J_{C,T}^{(r)}&(w,z,x,y)\\
		& = 
		\sum_{s=0}^{n-|T|}\,
		\sum_{t=0}^{|T|}\,
		Q_{s,t}^{(r)}(C,T)\,
		(w-z)^{t}\,
		z^{|T|-t}\,
		(x-y)^{s}\,
		y^{n-|T|-s}\\ 
		& = 
		\sum_{s=0}^{n-|T|}\,
		\sum_{t=0}^{|T|}\,
		\sum_{X \in \binom{\overline{T}}{s}}\,
		\sum_{Y \in \binom{T}{t}}\,
		B_{X,Y}^{(r)}(C;T)\,
		(w-z)^{t}\,
		z^{|T|-t}\,\\
		&\hspace{150pt}
		(x-y)^{s}\,
		y^{n-|T|-s}
	\end{align*}
	\begin{align*}
		& = 
		\sum_{s=0}^{n-|T|}\,
		\sum_{t=0}^{|T|}\,
		\sum_{X \in \binom{\overline{T}}{s}}\,
		\sum_{Y \in \binom{T}{t}}\,
		{\bbinom{\ell_T(X,Y)}{r}}_{q}
		(w-z)^{t}\,
		z^{|T|-t}\,\\
		&\hspace{150pt}
		(x-y)^{s}\,
		y^{n-|T|-s}\\ 
		& = 
		\sum_{s=0}^{n-|T|}\,
		\sum_{t=0}^{|T|}\,
		\sum_{X \in \binom{\overline{T}}{s}}\,
		\sum_{Y \in \binom{T}{t}}\,
		\frac{[\ell_T(X,Y),r]_{q}}{[r]_{q}}
		(w-z)^{t}\,
		z^{|T|-t}\,\\
		&\hspace{150pt}
		(x-y)^{s}\,
		y^{n-|T|-s}\\
		& =
		\frac{1}{[r]_{q}}
		\sum_{s=0}^{n-|T|}\,
		\sum_{t=0}^{|T|}\,
		\sum_{X \in \binom{\overline{T}}{s}}\,
		\sum_{Y \in \binom{T}{t}}\, 
		\left(
		\sum_{j=0}^{r} 
		{\bbinom{r}{j}}_{q}
		(-1)^{r-j}
		q^{\binom{r-j}{2}}
		(q^{\ell_{T}(X,Y)})^j
		\right)\\
		& \hspace{150pt}
		(w-z)^{t}\,
		z^{|T|-t}\,
		(x-y)^{s}\,
		y^{n-|T|-s}\\
		& =
		\frac{1}{[r]_{q}}
		\sum_{j=0}^{r} 
		{\bbinom{r}{j}}_{q}
		(-1)^{r-j}
		q^{\binom{r-j}{2}}	
		\sum_{s=0}^{n-|T|}\,
		\sum_{t=0}^{|T|}\,
		\left(
		\sum_{X \in \binom{\overline{T}}{s}}\,
		\sum_{Y \in \binom{T}{t}}\,
		(q^{j})^{\ell_{T}(X,Y)}
		\right)\\
		& \hspace{150pt}
		(w-z)^{t}\,
		z^{|T|-t}\,
		(x-y)^{s}\,
		y^{n-|T|-s}\\
		& =
		\frac{1}{[r]_{q}}
		\sum_{j=0}^{r} 
		{\bbinom{r}{j}}_{q}
		(-1)^{r-j}
		q^{\binom{r-j}{2}}
		J_{C,T}(w,z,x,y;q^j).\qedhere		
	\end{align*}
\end{proof}

\begin{proof}[Proof of Theorem~\ref{Thm:HarmHigherMac}]
	By Theorems~\ref{Thm:HigherToExtended} and~\ref{Thm:JacExtMacWilliams.}, 
	we can write
	\begin{align*}
		J_{C^{\perp},T}^{(r)}
		& (w,z,x,y)\\
		& =
		\dfrac{1}{[r]_{q}}
		\sum_{j=0}^{r}
		{\bbinom{r}{j}}_{q}
		(-1)^{r-j}
		q^{\binom{r-j}{2}}
		J_{C^{\perp},T}
		(w,z,x,y;q^j)\\
		& = 
		\dfrac{1}{[r]_{q}}\,
		\sum_{j=0}^{r}\,
		{\bbinom{r}{j}}_{q}\,
		(-1)^{r-j}\,
		q^{\binom{r-j}{2}}\,
		\frac{1}{q^{jk}}\\
		& \hspace{50pt}
		J_{C,T}
		\left( 
		{w+(q^{j}-1)z, w-z}, {x+(q^{j}-1)y, x-y};
		q^j
		\right)\\
		& =
		\dfrac{1}{[r]_{q}}
		\sum_{j=0}^{r}
		{\bbinom{r}{j}}_{q}
		(-1)^{r-j}
		q^{\binom{r-j}{2}-jk}\\
		& \hspace{50pt}
		J_{C,T}
		\left( 
		{w+(q^{j}-1)z, w-z}, {x+(q^{j}-1)y, x-y};
		q^j
		\right)\\
	\end{align*}
	\begin{align*}
		& =
		\dfrac{1}{[r]_{q}}
		\sum_{j=0}^{r}
		{\bbinom{r}{j}}_{q}
		(-1)^{r-j}
		q^{\binom{r-j}{2}-jk}
		\sum_{\ell=0}^{j}\,
		[j,\ell]_{q}\\
		& \hspace{50pt}
		J_{C,T}^{(\ell)}
		\left(
		{w+(q^{j}-1)z}, 
		{w-z}, 
		{x+(q^{j}-1)y}, 
		{x-y}
		\right)\\
		& =
		\dfrac{1}{[r]_{q}}
		\sum_{j=0}^{r}\,
		\sum_{\ell=0}^{j}
		{\bbinom{r}{j}}_{q}
		(-1)^{r-j}
		q^{\binom{r-j}{2}-jk}\,
		[j,\ell]_{q}\\
		& \hspace{50pt}
		J_{C,T}^{(\ell)}
		\left(
		{w+(q^{j}-1)z}, 
		{w-z}, 
		{x+(q^{j}-1)y}, 
		{x-y}
		\right).
	\end{align*}
	The proof is completed by the following easily-proven identity:
	\[
	\dfrac{1}{[r]_{q}}
	{\bbinom{r}{j}}_{q}
	[j,\ell]_{q}
	=
	\frac{1}{q^{j(r-j)} [r-j]_{q} q^{\ell(j-\ell)} [j-\ell]_{q}}\,.\qedhere
	\]
\end{proof}

\begin{ex}\label{Ex:MacJac}
	Let $C$ be a $[6,3]$ code over $\FF_{2}$ with generator matrix
		\[
		\begin{pmatrix}
			1 & 1 & 0 & 0 & 0 & 0\\
			0 & 0 & 1 & 1 & 0 & 0\\
			0 & 0 & 0 & 0 & 1 & 1
		\end{pmatrix}.
		\]
		It is easy to check that~$C$ is self-dual.
		Again, there are seven linear subcodes with dimension~$1$.
		Three of them have weight~$2$, three of them have weight~$4$ 
		and one of them have weight~$6$.
		The support sets of weights $2$ and $4$
		are as follows:
		\begin{align*}
			\mathcal{S}_{1,2}(C)
			& =
			\{\{1,2\},\{3,4\},\{5,6\}\},\\
			\mathcal{S}_{1,4}(C)
			& =
			\{\{1,2,3,4\},\{1,2,5,6\},\{3,4,5,6\}\}\,.
		\end{align*}
		There are seven linear subcodes of~$C$ with dimension~$2$. 
		The generator matrices of the subcodes are listed below:
		
		$ $\\ [1mm]
		\begin{tabular}{ccc}
			\(\begin{pmatrix}
				1 & 1 & 0 & 0 & 0 & 0\\
				0 & 0 & 1 & 1 & 0 & 0
			\end{pmatrix}\)
			&
			\(\begin{pmatrix}
				1 & 1 & 0 & 0 & 0 & 0\\
				0 & 0 & 1 & 1 & 1 & 1
			\end{pmatrix}\)
			&
			\(\begin{pmatrix}
				1 & 1 & 0 & 0 & 1 & 1\\
				0 & 0 & 1 & 1 & 0 & 0
			\end{pmatrix}\)\\ \rule{0pt}{4ex}
			\(\begin{pmatrix}
				1 & 1 & 0 & 0 & 1 & 1\\
				0 & 0 & 1 & 1 & 1 & 1
			\end{pmatrix}\)
			&
			\(\begin{pmatrix}
				1 & 1 & 0 & 0 & 0 & 0\\
				0 & 0 & 0 & 0 & 1 & 1
			\end{pmatrix}\)
			&
			\(\begin{pmatrix}
				1 & 1 & 1 & 1 & 0 & 0\\
				0 & 0 & 0 & 0 & 1 & 1
			\end{pmatrix}\)\\ \rule{0pt}{4ex}
			\(\begin{pmatrix}
				0 & 0 & 1 & 1 & 0 & 0\\
				0 & 0 & 0 & 0 & 1 & 1
			\end{pmatrix}\) 
			& &
		\end{tabular}
	
		\noindent
		Three of them have weight~$4$; 
		the set of supports of these three is
		\[
		\mathcal{S}_{2,4}(C)
		=
		\{\{1,2,3,4\},\{1,2,5,6\},\{3,4,5,6\}\}\,.
		\] 
		Remaining four subcodes have weight~$6$. 
		By direct calculation, we have
		\begin{align*}
			J_{C,\{i\}}^{(0)}
			(w,z,x,y)
			& =
			wx^{5},\\
			J_{C,\{i\}}^{(1)} 
			(w,z,x,y)
			& =
			zx^{4}y + 2wx^3y^2 + 2zx^2y^3 + wxy^4 + zy^5,\\
			J_{C,\{i\}}^{(2)}
			(w,z,x,y)
			& =
			wxy^{4} + 2zx^{2}y^{3}+6zy^5,
		\end{align*}
		for any coordinate position~$i$.
		Now by Theorem~\ref{Thm:HarmHigherMac}, we have
		\begin{multline*}
			J_{C^{\perp},\{i\}}^{(2)}
			(w,z,x,y)
			=
			\sum_{j = 0}^{2}
			\sum_{\ell = 0}^{j}
			(-1)^{3-j}
			\frac{2^{{2-j \choose 2}-j(2-j)-\ell(j-\ell)-j}}{[2-j]_{2} [j-\ell]_{2}}\\
			J_{C,\{i\}}^{(\ell)}
			\left( 
			{w+(2^{j}-1)z},\, 
			{w-z},\,
			{x+(2^{j}-1)y},\, 
			{x-y}
			\right).
		\end{multline*}
		This implies 
		\(
			J_{C^{\perp},\{i\}}^{(2)}
			(w,z,x,y)
			=
			wxy^{4} + 2zx^{2}y^{3}+6zy^5
		\)
		for any coordinate place~$i$.
\end{ex}

\section{Subcode designs and Jacobi polynomials}\label{Sec:Design}

Britz and Shiromoto~\cite{BrSh2008} gave a 
generalization of the Assmus-Mattson Theorem~\cite{AsMa69}
by introducing designs from subcode supports of an $\FF_{q}$-linear code. 
Moreover, Bonnecaze, Mourrain and Sol\'e~\cite{BoMoSo1999} showed that 
if the set of codewords of a code form a $t$-design for 
every given weight, the Jacobi polynomial of the code
can be evaluated from the weight enumerator of the code
using polarization operator. 
In this section, we give a subcode design generalization 
of this result by presenting a formula 
that evaluate higher Jacobi polynomial from 
the higher weight enumerator by using polarization operator. 

A $t$-$(n, k, \lambda)$ design is a collection $\mathcal{B}$ of $k$-subsets (called blocks) 
of a set $E$ of $n$ elements,
such that any $t$-subset of $E$ is contained in exactly $\lambda$ blocks. 
A design is called \emph{simple} if the blocks are distinct; 
otherwise, the design is said to have \emph{repeated blocks}.

Bonnecaze, Mourrain and Sol\'e~\cite{BoMoSo1999} pointed out 
a remarkable characterization of codes supporting designs: 
the set of codewords of a code~$C$ form a $t$-design for every
fixed weight, if and only if the Jacobi polynomial 
$J_{C,T}$ for a $t$-set $T$ is independent of $T$.
In the following theorem we give the subcode design generalization
this result.
We omit the proof of the theorem since it follows from the definitions. 

\begin{thm}\label{Thm:JacToDesign}
	Let $C$ be an $[n,k]$ code over $\FF_{q}$
	and $i$, $r$ be positive integers such that
	$r \leq k$ and $i \ge d_{r}$. 
	Then for every given~$i$, $\mathcal{S}_{r,i}(C)$  
	forms  a $t$-design
	if and only if
	the $r$-th higher Jacobi polynomial 
	$J_{C,T}^{(r)}$ 
	attached to $t$-set $T \subseteq [n]$ 
	is independent of the choices of $T$.
\end{thm}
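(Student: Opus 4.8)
The plan is to read off the equivalence by matching the coefficients of $J_{C,T}^{(r)}$ against block–incidence counts in the support collections $\mathcal{S}_{r,W}(C)$, where $W$ denotes a total weight (playing the role of the block size $i$ in the statement). The central observation is essentially definitional: for a $t$-set $T$ and a subcode $D \in \mathcal{D}_{r}(C)$ with $\wt(D) = W$, the containment $T \subseteq \Supp(D)$ holds if and only if $\wt_{T}(D) = |T| = t$ (and then automatically $\wt_{\overline{T}}(D) = W-t$). Hence the number of blocks of the multiset $\mathcal{S}_{r,W}(C)$ that contain $T$, counted with multiplicity, is exactly the coefficient $A_{W-t,\,t}^{(r)}(C,T)$, namely the coefficient of the monomial $z^{t}x^{n-W}y^{W-t}$ in $J_{C,T}^{(r)}(w,z,x,y)$.

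For the reverse implication I would assume $J_{C,T}^{(r)}$ is independent of the choice of $t$-set $T$. Then for each $W \ge d_{r}$ the coefficient $A_{W-t,\,t}^{(r)}(C,T)$ is one and the same number $\lambda_{W}$ for every $t$-set $T$, and by the observation above this says precisely that each $t$-subset of $[n]$ lies in exactly $\lambda_{W}$ blocks of $\mathcal{S}_{r,W}(C)$, i.e.\ $\mathcal{S}_{r,W}(C)$ is a $t$-$(n,W,\lambda_{W})$ design. For $W < d_{r}$ the collection $\mathcal{S}_{r,W}(C)$ is empty, which is why the hypothesis $i \ge d_{r}$ is imposed.

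For the forward implication I would use the standard fact that a $t$-design is also an $s$-design for every $0 \le s \le t$. Assume $\mathcal{S}_{r,W}(C)$ is a $t$-design for all $W \ge d_{r}$, fix a $t$-set $T$, and consider an arbitrary coefficient $A_{W-b,\,b}^{(r)}(C,T)$ with $0 \le b \le t$. Writing $N_{W}(S') := \#\{D \in \mathcal{D}_{r}(C) : \wt(D) = W,\ S' \subseteq \Supp(D)\}$ for the number of blocks of $\mathcal{S}_{r,W}(C)$ containing a set $S' \subseteq [n]$, and grouping the relevant subcodes $D$ according to $S = \Supp(D) \cap T$, inclusion--exclusion over the subsets of $T$ gives
\[
	A_{W-b,\,b}^{(r)}(C,T)
	=
	\sum_{S \in \binom{T}{b}}\,
	\sum_{S \subseteq S' \subseteq T}
	(-1)^{|S'|-b}\,
	N_{W}(S').
\]
By the $s$-design property, $N_{W}(S')$ depends only on $|S'|$ for $|S'| \le t$, so the right-hand side is a linear combination, with coefficients depending only on $b$, $t$ and $W$, of quantities independent of $T$. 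Thus every coefficient $A_{W-b,\,b}^{(r)}(C,T)$ is independent of $T$, and since these exhaust the coefficients of $J_{C,T}^{(r)}$, the polynomial itself is independent of $T$.

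I expect the only genuinely substantive point—beyond the near-definitional identification in the first paragraph—to be the forward direction's control of the coefficients $A_{W-b,\,b}^{(r)}$ with $b < t$: detecting $t$-subsets inside a support only pins down the top coefficient $A_{W-t,\,t}^{(r)}$, and one must invoke the derived $s$-designs together with the inclusion--exclusion bookkeeping above to fix the remaining coefficients. The surviving computation is routine, so I would record the inclusion--exclusion identity and cite the $t$-design $\Rightarrow$ $s$-design fact rather than expand the binomial coefficients in full.
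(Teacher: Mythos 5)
Your proof is correct. Note, however, that the paper does not actually give a proof of this theorem: it states ``We omit the proof of the theorem since it follows from the definitions,'' so there is no argument to compare yours against line by line. Your writeup in fact supplies the details behind that claim, and it correctly locates where the statement is \emph{not} purely definitional. The direction ``$J_{C,T}^{(r)}$ independent of $T$ $\Rightarrow$ designs'' is the definitional one: as you observe, $\wt_T(D)=|T|=t$ iff $T\subseteq\Supp(D)$, so the coefficient $A_{W-t,\,t}^{(r)}(C,T)$ of $z^{t}x^{n-W}y^{W-t}$ is exactly the number of blocks of $\mathcal{S}_{r,W}(C)$ (with multiplicity) containing $T$. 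The converse genuinely needs more, since a $t$-design hypothesis directly controls only those top coefficients; your use of the fact that a $t$-design is an $s$-design for all $s\le t$ (valid for multisets of blocks as well), combined with M\"obius inversion over the subsets of $T$ to express each $A_{W-b,\,b}^{(r)}(C,T)$ in terms of the $T$-independent quantities $N_W(S')$ with $|S'|\le t$, closes this gap correctly; the vanishing of coefficients with $0<W<d_r$ handles the remaining monomials. An alternative route to that converse, internal to the paper, is its Theorem~\ref{Thm:HigherJac_t_design}: under the design hypothesis, $J_{C,T}^{(r)}=\frac{1}{n(n-1)\cdots(n-t+1)}A^{t}W_{C}^{(r)}$ for every $t$-set $T$, and the right-hand side is manifestly independent of $T$; your inclusion--exclusion argument is more self-contained, while the polarization route reuses machinery the paper develops anyway.
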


Let $C$ be a $[n,k]$ code over~$\FF_{q}$, and 
$r$ be a positive integer such that~$r \leq k$.
For any $X \subseteq [n]$, 
we denote by $W(X)$ the $n$-tuple 
$(w_{1},\ldots,w_{n}) \in \{0,1\}^{n}$
such that $w_{i} = 1$ if $i \in X$,
and $0$ otherwise.
We define
\begin{align*}
	\mathcal{C}_{r}(C)
	&:=
	\{W(X) \mid X \in \mathcal{S}_{r}(C)\}.
\end{align*}
\noindent
Then $\mathcal{C}_{r,i}^{0}(C)$ 
(resp. $\mathcal{C}_{r,i}^{1}(C)$) 
denotes the subsets of $\mathcal{C}_{r}(C)$, where
$i$-th entry of the elements of $\mathcal{C}_{r}(C)$ 
takes the value $0$ (resp.~$1$) punctured at~$i$.
The set obtained from $\mathcal{C}_{r}(C)$ 
by puncturing at coordinate
place~$i$ will be denoted by~$\mathcal{C}_{r,i}(C)$.
We say $\mathcal{C}_{r,i}(C)$ is
\textit{unique} if no matter $i$-coordinate gives the same $\mathcal{C}_{r,i}(C)$.

\begin{lem} \label{Lem:JacWeight}
	Let $C$ be an $[n,k]$ code over $\FF_{q}$
	and $r$ be a positive integers such that
	$r \leq k$.
	Then for every coordinate place~$i$, we have

	\begin{equation*}\label{Equ:Target_1}
		J_{C,\{i\}}^{(r)}
		=
		w 
		\sum_{\bm{u} \in \mathcal{C}_{r,i}^{0}(C)}
		x^{n-\wt(\bm{u})} y^{\wt(\bm{u})}
		+
		z 
		\sum_{\bm{u} \in \mathcal{C}_{r,i}^{1}(C)}
		x^{n-\wt(\bm{u})} 
		y^{\wt(\bm{u})}.
	\end{equation*}
\end{lem}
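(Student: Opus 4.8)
The plan is to unwind Definition~\ref{Def:HigherJacobi} in the special case $T=\{i\}$ and to reorganize the resulting double sum as an enumeration over the indicator vectors collected in $\mathcal{C}_r(C)$. Since $|T|=1$, the inner index $j$ takes only the values $0$ and $1$, and the monomial $w^{|T|-j}z^{j}$ equals $w$ when $j=0$ and $z$ when $j=1$. Splitting the defining sum accordingly writes $J_{C,\{i\}}^{(r)}$ as $w$ times the partial sum over subcodes $D\in\D_{r}(C)$ with $\wt_{\{i\}}(D)=0$, plus $z$ times the partial sum over those with $\wt_{\{i\}}(D)=1$. The goal is then to identify these two partial sums with the two weight-enumerator sums on the right-hand side of the claim.

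The key device is the bijection $D\mapsto W(\Supp(D))$ between $\D_{r}(C)$ and the multiset $\mathcal{C}_{r}(C)$. Under this bijection the condition $\wt_{\{i\}}(D)=0$ (that is, $i\notin\Supp(D)$) corresponds exactly to $W(\Supp(D))$ having $i$-th entry $0$, hence to membership in $\mathcal{C}_{r,i}^{0}(C)$ after puncturing at~$i$; likewise $\wt_{\{i\}}(D)=1$ corresponds to membership in $\mathcal{C}_{r,i}^{1}(C)$. Consequently the coefficient $A_{\ell,0}^{(r)}(C,\{i\})$ is precisely the number of $\bm{u}\in\mathcal{C}_{r,i}^{0}(C)$ of a prescribed weight, and similarly $A_{\ell,1}^{(r)}(C,\{i\})$ counts the vectors in $\mathcal{C}_{r,i}^{1}(C)$, so each partial sum over weight classes can be rewritten as a single sum over the vectors $\bm{u}$ themselves.

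What makes the two cases align is the weight identity $\wt_{\overline{\{i\}}}(D)=\wt(\bm{u})$, where $\bm{u}$ is the image of $W(\Supp(D))$ after puncturing at~$i$. For $D$ with $i\notin\Supp(D)$ this holds because deleting a zero coordinate leaves the weight unchanged, whereas for $D$ with $i\in\Supp(D)$ it holds because $\wt(D)=\wt_{\overline{\{i\}}}(D)+1$ and puncturing removes exactly the single nonzero coordinate at~$i$. Substituting $\wt(\bm{u})$ for $\wt_{\overline{\{i\}}}(D)$ in the $x,y$-monomials of both partial sums collapses the double sums into the asserted expression, with the prefactors $w$ and $z$ already separated in the first step. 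The only point requiring care—the main, if modest, obstacle—is to check this weight bookkeeping uniformly across the $j=0$ and $j=1$ cases and to confirm that the exponents of $x$ and $y$ produced by the reindexing match those recorded in the statement.
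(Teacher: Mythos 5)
Your argument is correct in substance and is precisely the route the paper takes: the paper's entire proof of Lemma~\ref{Lem:JacWeight} is ``It can be shown immediately from the definition,'' and your write-up simply supplies the details (splitting Definition~\ref{Def:HigherJacobi} over $j\in\{0,1\}$ for $T=\{i\}$, the correspondence $D\mapsto W(\Supp(D))$, and the bookkeeping $\wt_{\overline{\{i\}}}(D)=\wt(\bm{u})$). One caveat: the final check that you defer and then assert --- that the resulting exponents ``match those recorded in the statement'' --- does not hold literally. Since $\bm{u}$ is punctured at $i$, it has length $n-1$, so your (correct) substitution produces the monomial $x^{\,n-1-\wt(\bm{u})}y^{\wt(\bm{u})}$, whereas the statement prints $x^{\,n-\wt(\bm{u})}y^{\wt(\bm{u})}$; the latter has total degree $n+1$, while every monomial of $J^{(r)}_{C,\{i\}}$ has total degree $n$. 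The printed exponent is an off-by-one slip in the paper itself, as Example~\ref{Ex:MacJac} confirms: there $J^{(1)}_{C,\{i\}}$ contains $2wx^{3}y^{2}$, coming from the two weight-$2$ vectors of $\mathcal{C}^{0}_{1,i}(C)$, i.e.\ $x$-exponent $6-1-2=3$ rather than $6-2=4$. So your derivation is the right one --- it proves the corrected identity --- but the honest conclusion of your ``exponent check'' is that it corrects the printed formula rather than matches it.
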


\begin{proof}
	It can be shown immediately from the definition.
\end{proof}

Now we describe 
the Aronhold polarization operator~$A$ presented in~\cite{BoMoSo1999}.
Let $f$ be a homogeneous polynomial in~$w$, $z$, $x$ and $y$.
We define the polarization operator $A$
for the homogeneous polynomial $f$ as
\[
	A . f 
	= 
	w\,
	\frac{\partial f}{\partial x}
	+
	z\,
	\frac{\partial f}{\partial y}
\]

Now we the following result.

\begin{thm} \label{Thm:HigherJac_1_Design}
	Let $C$ be an $[n,k]$ code over~$\FF_{q}$,
	and $r$, $j$ be a integer such that $r \leq k$
	and $j \geq d_{r}$.
	If $\mathcal{S}_{r,j}(C)$ forms a
	$1$-design for every given~$j$,  
	then for any coordinate place $i$,
	\[
		J_{C,\{i\}}^{(r)}
		=
		\frac{1}{n}\,
		AW_{C}^{(r)}.
	\]
\end{thm}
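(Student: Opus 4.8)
The plan is to start from the explicit shape of $J_{C,\{i\}}^{(r)}$ furnished by Lemma~\ref{Lem:JacWeight}, to evaluate the right-hand side $\frac{1}{n}AW_C^{(r)}$ directly, and to show that the $1$-design hypothesis forces the two to coincide coefficient by coefficient. The whole argument is a bookkeeping comparison, so the work is in setting up the right counts and aligning exponents.

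First I would compute $AW_C^{(r)}$ explicitly. Writing $W_C^{(r)}(x,y)=\sum_{j}A_j^{(r)}(C)\,x^{n-j}y^{j}$ and applying the Aronhold operator $A.f=w\,\partial f/\partial x+z\,\partial f/\partial y$ gives
\[
AW_C^{(r)}=w\sum_{j}A_j^{(r)}(C)(n-j)\,x^{n-j-1}y^{j}+z\sum_{j}A_j^{(r)}(C)\,j\,x^{n-j}y^{j-1}.
\]
Thus $\frac{1}{n}AW_C^{(r)}$ carries the $w$-coefficient $\frac{n-j}{n}A_j^{(r)}(C)$ on $x^{n-1-j}y^{j}$ and the $z$-coefficient $\frac{j}{n}A_j^{(r)}(C)$ on $x^{n-j}y^{j-1}$.

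Next I would turn the design hypothesis into a counting statement. For each $j\ge d_r$ the multiset $\mathcal{S}_{r,j}(C)$ consists of $A_j^{(r)}(C)$ blocks, each of size $j$, and the assumption that it is a $1$-design means every coordinate $i$ lies in the same number $\lambda_j$ of blocks. Double counting point–block incidences yields $n\lambda_j=j\,A_j^{(r)}(C)$, hence $\lambda_j=\frac{j}{n}A_j^{(r)}(C)$. Consequently, among the weight-$j$ subcode supports exactly $\lambda_j$ contain $i$ while exactly $A_j^{(r)}(C)-\lambda_j=\frac{n-j}{n}A_j^{(r)}(C)$ avoid $i$, and—crucially—both counts are independent of the choice of $i$.

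Finally I would substitute these counts into the expression from Lemma~\ref{Lem:JacWeight}: grouping the vectors of $\mathcal{C}_{r,i}^{0}(C)$ (resp.\ $\mathcal{C}_{r,i}^{1}(C)$) by their weight turns the $w$- and $z$-sums into $\sum_j\frac{n-j}{n}A_j^{(r)}(C)\,x^{n-1-j}y^{j}$ and $\sum_j\frac{j}{n}A_j^{(r)}(C)\,x^{n-j}y^{j-1}$, which match $\frac{1}{n}AW_C^{(r)}$ term by term. The only point needing care—the \emph{main obstacle}, such as it is—is matching the exponents produced by the punctured indicator vectors in Lemma~\ref{Lem:JacWeight} with those produced by $\partial_x$ and $\partial_y$; once the $1$-design identity $\lambda_j=\frac{j}{n}A_j^{(r)}(C)$ is in hand, the equality is immediate.
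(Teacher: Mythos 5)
Your proof is correct and follows essentially the same route as the paper's: both rest on Lemma~\ref{Lem:JacWeight} together with the observation that the $1$-design hypothesis makes the number of weight-$j$ subcode supports containing (resp.\ avoiding) a fixed coordinate independent of that coordinate, and then compare coefficients with $\frac{1}{n}AW_C^{(r)}$. The only cosmetic difference is how the factor $\frac{1}{n}$ appears: the paper writes $\partial W_C^{(r)}/\partial x$ and $\partial W_C^{(r)}/\partial y$ as sums of the punctured enumerators over all $n$ coordinates and divides by $n$, whereas you make the same count explicit through the incidence double count $n\lambda_j = j\,A_j^{(r)}(C)$.
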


\begin{proof}
	Clearly,
	\begin{align*}
		\frac{\partial }{\partial x} 
		W_C^{(r)}
		& =
		\sum_{i=1}^{n}
		\left(
			\sum_{\bm{u} \in \mathcal{C}_{r,i}^{0}(C)}
			x^{n-\wt(\bm{u})} y^{\wt(\bm{u})}
		\right),\\
		\frac{\partial }{\partial y} 
		W_C^{(r)}
		& =
		\sum_{i=1}^{n}
		\left(
		\sum_{\bm{u} \in \mathcal{C}_{r,i}^{1}(C)}
		x^{n-\wt(\bm{u})} y^{\wt(\bm{u})}
		\right).
	\end{align*}
	\noindent
	Since $\mathcal{S}_{r,j}(C)$ forms a $1$-design
	for every given~$j$, 
	no matter $i$ gives the same $\mathcal{C}_{r,i}(C)$.
	Therefore,
	\begin{align*}
		\frac{1}{n}
		\frac{\partial }{\partial x} 
		W_C^{(r)}
		& =
		\sum_{\bm{u} \in \mathcal{C}_{r,i}^{0}(C)}
		x^{n-\wt(\bm{u})} y^{\wt(\bm{u})},\\
		\frac{1}{n}
		\frac{\partial }{\partial y} 
		W_C^{(r)}
		& =
		\sum_{\bm{u} \in \mathcal{C}_{r,i}^{1}(C)}
		x^{n-\wt(\bm{u})} y^{\wt(\bm{u})}.
	\end{align*}
	Then by Lemma~\ref{Lem:JacWeight}, we have
	\[
		J_{C,\{i\}}^{(r)}
		= 
		\frac{1}{n} 
		\left(
		w\, \frac{\partial }{\partial x}
		W_{C}^{(r)}
		+
		z\, \frac{\partial }{\partial y}
		W_{C}^{(r)}
		\right)
		=
		\frac{1}{n}\, AW_{C}^{(r)}.\qedhere
	\]
\end{proof}

\begin{ex}
	From Example~\ref{Ex:MacJac}, 
	we can see that
	$\mathcal{S}_{r,j}(C)$
	forms a $1$-design for $r = 1$, $j = 2, 4, 6$
	and $r = 2$, $j = 4, 6$.
	Moreover, the $r$-th higher weight enumerators 
	for $r = 1, 2$ are as follows:
	\begin{align*}
		W_{C}^{(1)}(x,y)
		& =
		3x^4y^2 + 3x^{2}y^{4} + y^{6},\\
		W_{C}^{(2)}(x,y)
		& =
		3x^{2} y^{4} + 4y^{6}.\\
	\end{align*}
	By Theorem~\ref{Thm:HigherJac_1_Design}, we have
	for any coordinate place~$i$:
	\begin{align*}
		J_{C,\{i\}}^{(1)}
		(w,z,x,y)
		& =
		zx^{4}y + 2wx^3y^2 + 2zx^2y^3 + wxy^4 + zy^5\\
		& =
		\frac{1}{6}
		AW_{C}^{(1)}(x,y),\\
		J_{C,\{i\}}^{(2)}
		(w,z,x,y)
		& =
		wxy^{4} + 2zx^{2}y^{3}+6zy^5\\
		& =
		\frac{1}{6}
		AW_{C}^{(2)}(x,y).
	\end{align*}
	
\end{ex}

The $t$-design generalization of Theorem~\ref{Thm:HigherJac_1_Design} 
is as follows:

\begin{thm}\label{Thm:HigherJac_t_design}
	Let $C$ be an $[n,k]$ code over~$\FF_{q}$,
	and $r, i$ be positive integers such that $r \leq k$
	and $i \geq d_{r}$.
	If $\mathcal{S}_{r,i}(C)$ forms a
	$t$-design for every given~$i$,  
	then for all $T \subseteq [n]$ with $|T|=t$,
	we have
	\[
		J_{C,T}^{(r)} 
		= 
		\frac{1}{n(n-1)\ldots (n-t+1)} 
		A^t\, W_C^{(r)}.
	\]
\end{thm}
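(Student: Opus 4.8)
The plan is to prove the identity by induction on $t$, the engine being a single polarization recurrence that expresses the action of $A$ on a higher Jacobi polynomial as a sum over one-element extensions of the index set. Concretely, I would first establish that for every $S \subseteq [n]$,
\[
	A\, J_{C,S}^{(r)} = \sum_{i \in \overline{S}} J_{C,S\cup\{i\}}^{(r)}.
\]
This is the natural generalization of the computation carried out in the proof of Theorem~\ref{Thm:HigherJac_1_Design} (the case $S = \emptyset$), and it is the heart of the argument.

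To prove the recurrence I would argue monomial by monomial. Fix $D \in \D_r(C)$ and set $a = \#\{i \in S : i \notin \Supp(D)\}$, $b = \#\{i \in S : i \in \Supp(D)\}$, $c = \#\{i \in \overline{S} : i \notin \Supp(D)\}$, and $d = \#\{i \in \overline{S} : i \in \Supp(D)\}$, so that the contribution of $D$ to $J_{C,S}^{(r)}$ is the monomial $w^a z^b x^c y^d$. A direct computation gives
\[
	A\,(w^a z^b x^c y^d) = c\, w^{a+1} z^b x^{c-1} y^d + d\, w^a z^{b+1} x^c y^{d-1}.
\]
On the other hand, adjoining a coordinate $i \in \overline{S}$ to $S$ changes the contribution of $D$ to $w^{a+1} z^b x^{c-1} y^d$ when $i \notin \Supp(D)$ (there are $c$ such $i$) and to $w^a z^{b+1} x^c y^{d-1}$ when $i \in \Supp(D)$ (there are $d$ such $i$). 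Summing over $i \in \overline{S}$ therefore reproduces $A\,(w^a z^b x^c y^d)$ exactly; summing over all $D \in \D_r(C)$ yields the recurrence.

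With the recurrence in hand, the design hypothesis closes the argument. Since $\mathcal{S}_{r,i}(C)$ forms a $t$-design for every given $i$, it also forms an $s$-design for every $s$ with $0 \leq s \leq t$; hence, by Theorem~\ref{Thm:JacToDesign}, the polynomial $J_{C,S}^{(r)}$ depends only on $|S|$ whenever $|S| \leq t$. Writing $J_s^{(r)}$ for this common value, we have $J_0^{(r)} = W_C^{(r)}$ and $J_t^{(r)} = J_{C,T}^{(r)}$. Applying the recurrence to any $S$ with $|S| = s-1 < t$ and using that all $n - (s-1)$ terms $J_{C,S\cup\{i\}}^{(r)}$ equal $J_s^{(r)}$ gives $A\, J_{s-1}^{(r)} = (n-s+1)\, J_s^{(r)}$. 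Iterating this relation for $s = 1, \ldots, t$ telescopes to $A^t W_C^{(r)} = n(n-1)\cdots(n-t+1)\, J_{C,T}^{(r)}$, which is the claim.

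I expect the main obstacle to be the clean bookkeeping in the recurrence: one must verify that the two terms produced by $A$ (via $\partial_x$ and $\partial_y$) correspond precisely to adjoining, respectively, a zero-coordinate and a support-coordinate of each subcode $D$, with the correct multiplicities $c$ and $d$. Once this is nailed down, the passage from the $t$-design hypothesis to the independence of $J_{C,S}^{(r)}$ from $S$ (for all $|S| \leq t$) via the derived-design property and Theorem~\ref{Thm:JacToDesign} is routine, and the telescoping of the recurrence is immediate.
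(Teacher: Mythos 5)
Your proof is correct and takes essentially the same route as the paper: an induction on the size of the reference set driven by the polarization recurrence, where the $t$-design hypothesis (via Theorem~\ref{Thm:JacToDesign} and the derived-design property) makes $J_{C,S}^{(r)}$ depend only on $|S|$, so that averaging over one-coordinate extensions yields $J_{C,S\cup\{i\}}^{(r)} = \frac{1}{n-|S|}A\,J_{C,S}^{(r)}$ and the product telescopes. In fact your monomial-by-monomial verification of the identity $A\,J_{C,S}^{(r)} = \sum_{i\in\overline{S}} J_{C,S\cup\{i\}}^{(r)}$ makes explicit the ``delete a coordinate and move it to $K$'' step that the paper only asserts.
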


\begin{proof}
	We prove by induction. 
	For $k<t$, we assume 
	\begin{equation*} \label{eq_prgenjac}
		J_{C,K}^{(r)}
		= 
		\frac{1}{n(n-1)\ldots (n-k+1)} 
		A^k\, W_C^{(r)},
	\end{equation*}
	where $K \subseteq [n]$ with $|K| = k$. 
	By deleting a coordinate $i$ on $[n]\backslash K$ 
	and move the coordinate $i$ to $K$,
	we have that
	\begin{align*}
		J_{C,K\cup \{i\}}^{(r)} 
		= & 
		\frac{1}{n-k} 
		\left(
			w\, \frac{\partial }{\partial x}
			J_{C,K}^{(r)}
			+
			z\, \frac{\partial }{\partial y}
			J_{C,K}^{(r)}
		\right) \\
		= & 
		\frac{1}{n-k} 
		\left( 
			A\, 
			\frac{1}{n(n-1) \cdots (n-k+1)} 
			A^k\, W_C^{(r)} 
		\right)\\
		= & \frac{1}{n(n-1) \cdots (n-k+1)(n-k)}\, 
		A^{k+1}\, W_C^{(r)}.\qedhere
	\end{align*}
\end{proof}

\section{Computation of higher Jacobi polynomials}\label{Sec:CompJac}

In this section, we show how to compute higher Jacobi polynomials 
using harmonic higher weight enumerators.
Introducing the concept of harmonic weight enumerator, 
Bachoc~\cite{Bachoc} gave a method that 
compute Jacobi polynomials
using harmonic weight enumerators through Hahn polynomials.
Recently, Britz, Chakraborty and Miezaki~\cite{BrChMi2026} 
presented the harmonic generalization of higher weight enumerators.
Therefore, it is very natural to investigate the computation of 
higher Jacobi polynomials using harmonic higher weight enumerators
through Hahn polynomials.

\subsection{Discrete Harmonic function}
Let
$\R 2^{[n]}$ and $\R \binom{[n]}{d}$
the real vector spaces spanned by the elements of  
$2^{[n]}$ and $\binom{[n]}{d}$,
respectively. 
An element of 
$\R\binom{[n]}{d}$
is denoted by
\begin{equation}\label{Equ:FunREd}
	f :=
	\sum_{Z \in \binom{[n]}{d}}
	f(Z) Z
\end{equation}
and is identified with the real-valued function on 
$\binom{[n]}{d}$
given by 
$Z \mapsto f(Z)$. 
Such an element 
$f \in \R \binom{[n]}{d}$
can be extended to an element 
$\widetilde{f}\in \R 2^{[n]}$
by setting, for all 
$X \in 2^{[n]}$,
\begin{equation}\label{Equ:TildeF}
	\widetilde{f}(X)
	:=
	\sum_{Z\in \binom{[n]}{d}, Z\subseteq X}
	f(Z).
\end{equation}
{Note that $\widetilde{f}(\emptyset) = f(\emptyset)$ when $d=0$,
	and that $\widetilde{f}(\emptyset) = 0$ otherwise}. 
If an element 
$g \in \R 2^{[n]}$
is equal to $\widetilde{f}$  
for some $f \in \R \binom{[n]}{d}$, 
then we say that $g$ has degree~$d$. 
{The differentiation operator $\gamma$ on $\mathbb{R}\binom{[n]}{d}$ 
	is defined by} linearity from the identity
\begin{equation}\label{Equ:Gamma}
	\gamma(Z) := 
	\sum_{Y\in \binom{[n]}{d-1}, Y\subseteq Z} 
	Y
\end{equation}
for all 
$Z \in \binom{[n]}{d}$
and for all $d=0,1, \ldots,n$.
Also, $\Harm_{d}(n)$ is the kernel of~$\gamma$:
\begin{equation}\label{Equ:Harm}
	\Harm_{d}(n) 
	:= 
	\ker
	\left(
	\gamma\big|_{\R \binom{[n]}{d}}
	\right).
\end{equation}

\begin{rem}\label{Rem:New}
	Let~$f \in \Harm_{d}(n)$. 
	Since $\sum_{Z \in \binom{[n]}{d}} f(Z) = 0$, 
	it is easy to check from~(\ref{Equ:Gamma}) that
	$\sum_{X \in \binom{[n]}{t}} \widetilde{f}(X) = 0$, where
	$1 \leq d \leq t \leq n$.
\end{rem}

The following theorem gives a remarkable characterization
of $t$-designs in terms of the harmonic spaces.

\begin{thm}[\cite{Delsarte}]\label{Thm:Delsert}
	Let $i,t$ be integers such that $0 \leq t \leq i \leq n$.
	A subset $\mathcal{B} \subseteq \binom{[n]}{i}$ is a $t$-design 
	if and only if
	\[
		\sum_{B\in \mathcal{B}}
		\left(
		\sum_{A \in \binom{[n]}{d},\, A \subseteq B} 
		f(A)) = 0
		\right)
	\]
	for all $f \in \Harm_{d}(n)$, $1 \leq d \leq t$.
\end{thm}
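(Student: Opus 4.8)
The plan is to reformulate both sides of the equivalence in terms of the block-containment counts. For each $d$ with $0\le d\le t$ and each $Z\in\binom{[n]}{d}$, set $\mu_d(Z):=\#\{B\in\mathcal{B}\mid Z\subseteq B\}$, regarded as an element of $\R\binom{[n]}{d}$. Interchanging the order of summation in the defining identity gives, for $f\in\Harm_d(n)$,
\[
	\sum_{B\in\mathcal{B}}\widetilde{f}(B)
	=\sum_{B\in\mathcal{B}}\sum_{\substack{Z\in\binom{[n]}{d}\\ Z\subseteq B}}f(Z)
	=\sum_{Z\in\binom{[n]}{d}}\mu_d(Z)\,f(Z)
	=\langle\mu_d,f\rangle,
\]
so the displayed condition of the theorem says precisely that $\mu_d$ is orthogonal to $\Harm_d(n)$ for every $1\le d\le t$. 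On the other hand, $\mathcal{B}$ is a $t$-design exactly when $\mu_t$ is a constant function on $\binom{[n]}{t}$. Thus the theorem reduces to the claim that $\mu_t$ is constant if and only if $\mu_d\perp\Harm_d(n)$ for all $1\le d\le t$.

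First I would record the elementary counting identity obtained by double counting chains $Z\subseteq T\subseteq B$: for $e\le t$ and $Z\in\binom{[n]}{e}$,
\[
	\sum_{\substack{T\in\binom{[n]}{t}\\ T\supseteq Z}}\mu_t(T)=\binom{i-e}{t-e}\,\mu_e(Z).
\]
Multiplying by $h(Z)$ for $h\in\Harm_e(n)$ and summing over $Z$ yields the adjunction
\[
	\langle\mu_t,\widetilde{h}\rangle=\binom{i-e}{t-e}\,\langle\mu_e,h\rangle,
\]
where $\widetilde{h}$ is evaluated on $t$-subsets. The forward direction is then immediate: if $\mu_t\equiv\lambda$ is constant, the identity with $e=d$ gives $\binom{n-d}{t-d}\lambda=\binom{i-d}{t-d}\mu_d(Z)$ for every $Z$, so each $\mu_d$ is constant; since $\sum_Z h(Z)=0$ for every $h\in\Harm_d(n)$ with $d\ge1$ (Remark~\ref{Rem:New}), constants are orthogonal to $\Harm_d(n)$, whence $\mu_d\perp\Harm_d(n)$.

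For the converse I would invoke the orthogonal decomposition of the permutation module,
\[
	\R\binom{[n]}{t}=\bigoplus_{e=0}^{t}\widetilde{\Harm_e(n)},
\]
into the images of the harmonic spaces under the injective extension map $h\mapsto\widetilde{h}$, the $e=0$ summand being exactly the line of constant functions. Consequently $\mu_t$ is constant if and only if $\langle\mu_t,\widetilde{h}\rangle=0$ for every $h\in\Harm_e(n)$ with $1\le e\le t$. By the adjunction above this inner product equals $\binom{i-e}{t-e}\langle\mu_e,h\rangle$, which vanishes precisely because of the hypothesis $\mu_e\perp\Harm_e(n)$. This closes the equivalence, and hence the proof.

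The step I expect to be the main obstacle is justifying the harmonic decomposition of $\R\binom{[n]}{t}$, together with the injectivity of the extension maps and the mutual orthogonality of their images: this is the representation-theoretic input that the spaces $\widetilde{\Harm_e(n)}$ are the isotypic components (the Specht modules $S^{(n-e,e)}$) of the $S_n$-module $\R\binom{[n]}{t}$, valid in the range $n\ge t+e$. Everything else is elementary double counting; this structural fact is where the real content, and the need to track degenerate small-$n$ cases, resides. If one prefers to avoid representation theory, I would instead build the decomposition by hand from the operator $\gamma$ of~(\ref{Equ:Gamma}) and its adjoint, showing inductively that $\ker\gamma$ at each level contributes a new orthogonal summand.
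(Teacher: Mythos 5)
The paper itself offers no proof of this statement: it is imported verbatim from Delsarte~\cite{Delsarte} and used as a black box, so there is no internal argument to compare against line by line. Your reconstruction is correct and is essentially the standard proof of Delsarte's theorem. The reformulation of both sides in terms of the containment counts $\mu_d$, the double-counting identity $\sum_{T\supseteq Z}\mu_t(T)=\binom{i-e}{t-e}\mu_e(Z)$, the resulting adjunction $\langle\mu_t,\widetilde{h}\rangle=\binom{i-e}{t-e}\langle\mu_e,h\rangle$, and the forward direction (constancy of $\mu_t$ forces constancy of every $\mu_d$, and constants are orthogonal to $\Harm_d(n)$ because $\sum_Z h(Z)=0$ for harmonic $h$ of degree $d\geq 1$) are all elementary and check out. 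The converse correctly isolates the one nontrivial input: the orthogonal decomposition of $\R\binom{[n]}{t}$ into the images $\widetilde{\Harm_e(n)}$ of the harmonic spaces, with the $e=0$ summand the constants. Invoking this is legitimate, since it is a classical fact (the decomposition of the $S_n$-permutation module on $t$-sets into irreducibles, which is the heart of Delsarte's theory and logically independent of the theorem being proved), and your fallback of constructing it by hand from $\gamma$ and its adjoint is a genuine alternative. One caveat you only half-acknowledge: for $t>n/2$ the clean statement $\R\binom{[n]}{t}=\bigoplus_{e=0}^{t}\widetilde{\Harm_e(n)}$ is false as written, because the extension maps annihilate $\Harm_e(n)$ for $e>n-t$ rather than embedding it; the decomposition runs only over $0\leq e\leq\min(t,n-t)$. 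Your argument survives this, since the converse needs orthogonality of $\mu_t$ only to the summands that actually occur, and those are supplied by the hypotheses with $d\leq n-t$, while the forward direction never uses the decomposition at all---but a complete write-up should state the decomposition in its truncated form. In short: what your proof buys over the paper is a self-contained treatment; what it costs is precisely the structural fact the paper sidesteps by citing Delsarte.
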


\begin{df}\label{DefHarmWeightBachoc}
	Let $C$ be an $[n,k]$ code over $\FF_{q}$ and let $f\in\Harm_{d}(n)$, where $d\neq 0$. 
	The \emph{harmonic $r$-th higher weight enumerator} of $C$ associated to $f$ is
	defined as follows:	
	\[
	W_{C,f}^{(r)}(x,y) 
	:=
	\sum^{n}_{i=0} 
	A_{i,f}^{(r)}(C) x^{n-i} y^{i},
	\]
	where 
	\[
	A_{i,f}^{(r)}(C)
	:= 
	\sum_{\substack{D \in \mathcal{D}_{r}(C),\\\wt(D) = i}} 
	\widetilde{f}(\supp(D))\,.
	\]
\end{df}

\begin{rem}
	Clearly,
	$A_{0,f}^{(r)}(C) = 0$ for all $0 \leq r \leq k$.
\end{rem}

Using Theorem~\ref{Thm:Delsert}, 
the following result characterizes 
the designs formed by subcode supports of a code
over~$\FF_{q}$ in term of harmonic higher weight enumerators. 

\begin{thm}[\cite{BrChMi2026}]\label{Thm:HarmDesign}
	Let $C$ be an $[n,k]$ code over~$\FF_{q}$.
	Let $r,i$ be the positive integers such that 
	$r \leq k$ and $i \geq d_{r}$.
	Then the set $\mathcal{S}_{r,i}(C)$ 
	forms a $t$-design 
	if and only if
	{$A_{i,f}^{(r)}(C) = 0$} for all
	$f \in \Harm_{d}(n)$, $1 \leq d \leq t$.
\end{thm}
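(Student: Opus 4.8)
The plan is to reduce the statement directly to Delsarte's characterization of $t$-designs (Theorem~\ref{Thm:Delsert}) by rewriting the harmonic coefficient $A_{i,f}^{(r)}(C)$ as a summation over the multiset $\mathcal{S}_{r,i}(C)$ of subcode supports. First I would unfold the definition of $A_{i,f}^{(r)}(C)$ from Definition~\ref{DefHarmWeightBachoc}, substituting the formula~(\ref{Equ:TildeF}) for $\widetilde{f}$ to obtain
\[
	A_{i,f}^{(r)}(C)
	=
	\sum_{\substack{D \in \mathcal{D}_{r}(C),\\ \wt(D) = i}}
	\widetilde{f}(\Supp(D))
	=
	\sum_{\substack{D \in \mathcal{D}_{r}(C),\\ \wt(D) = i}}\;
	\sum_{\substack{Z \in \binom{[n]}{d},\\ Z \subseteq \Supp(D)}}
	f(Z).
\]

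Next I would reindex the outer sum. Since $\mathcal{S}_{r,i}(C)$ is by definition the multiset $\{\Supp(D) \mid D \in \mathcal{D}_{r}(C),\ \wt(D) = i\}$, with each support occurring with multiplicity equal to the number of weight-$i$, dimension-$r$ subcodes realizing it, the summation over such $D$ coincides with a summation over the blocks $Y$ of $\mathcal{S}_{r,i}(C)$ counted with multiplicity. This gives
\[
	A_{i,f}^{(r)}(C)
	=
	\sum_{Y \in \mathcal{S}_{r,i}(C)}\;
	\sum_{\substack{Z \in \binom{[n]}{d},\\ Z \subseteq Y}}
	f(Z),
\]
which is exactly the quantity appearing in Theorem~\ref{Thm:Delsert} for the collection $\mathcal{B} = \mathcal{S}_{r,i}(C) \subseteq \binom{[n]}{i}$. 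The hypotheses $r \le k$ and $i \ge d_{r}$ guarantee that $\mathcal{S}_{r,i}(C)$ is a well-defined family of $i$-subsets of $[n]$ to which Delsarte's theorem applies (with $t \le i$).

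Finally I would invoke Theorem~\ref{Thm:Delsert}: the collection $\mathcal{S}_{r,i}(C)$ forms a $t$-design if and only if the displayed double sum vanishes for every $f \in \Harm_{d}(n)$ and every $d$ with $1 \le d \le t$. By the identity just established, this vanishing is equivalent to $A_{i,f}^{(r)}(C) = 0$ for all such $f$ and $d$, which is precisely the assertion of the theorem.

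The main obstacle I anticipate is not any deep computation but the careful bookkeeping of the multiset structure: one must verify that summing $\widetilde{f}(\Supp(D))$ over the subcodes $D$ genuinely agrees with summing over the blocks of $\mathcal{S}_{r,i}(C)$ with their correct multiplicities, and confirm that the form of Theorem~\ref{Thm:Delsert} being used remains valid for designs with repeated blocks, since $\mathcal{S}_{r,i}(C)$ is in general a multiset rather than a simple design. Once this identification is pinned down, the equivalence follows immediately from Delsarte's characterization.
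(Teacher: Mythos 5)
Your proposal is correct and takes essentially the same approach as the paper's own proof: unfold $\widetilde{f}$ via~(\ref{Equ:TildeF}), reindex the sum over weight-$i$ subcodes $D \in \mathcal{D}_{r}(C)$ as a sum over the blocks $Y \in \mathcal{S}_{r,i}(C)$ (counted with multiplicity), and then invoke Delsarte's characterization, Theorem~\ref{Thm:Delsert}. Your explicit attention to the multiset bookkeeping is a point the paper's proof treats silently, but it does not change the substance of the argument.
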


\subsection{Hahn polynomials}

Hahn~\cite{Hahn1949} defined a remarkable polynomial,
now known as Hahn polynomial, that can 
form a family of orthogonal polynomials. 
Later, several properties of this polynomial were studied 
by Karlin and McGregor~\cite{Karlin}. 
In general, 
the Hahn polynomials can be defined
using the generalized hypergeometric series:
\[
	_3F_2(a_1,a_2,a_3;b_1,b_2;z) 
	:= 
	\sum_{i=0}^{\infty} 
	\dfrac{(a_1)_i(a_2)_i(a_3)_i}{(b_1)_i(b_2)_i}.
	\dfrac{z^i}{i!}
\]
where $ (a)_0 = 1 $ and $ (a)_i = a(a+1)(a+2) \dots (a+i-1) $ for $ i \geq 1 $. 
For real $\alpha, \beta$ and for positive integer $ N $, 
Weber and Erd{\'e}lyi~\cite{WeEr1952} 
explicitly defined the Hahn polynomials 
$Q_m(x) \equiv Q_m(x; \alpha, \beta, N)$ 
as follows:
\begin{align*}
	Q_m(x) 
	& := 
	{_3F_2}(-m,\, -x,\, m+\alpha+\beta+1;\, \alpha+1,\, -N+1;\, 1)\\
	&\,\, =
	\sum_{i=0}^{m} 
	\dfrac{(-m)_i(-x)_i(m+\alpha+\beta+1)_i}{(\alpha+1)_i(-N+1)_i}
	\cdot
	\dfrac{1}{i!}
\end{align*}
for $ m = 0,1, \dots, N-1 $. 
From the above formula it can be seen 
that $Q_m(x)$ is a polynomial in the variable $x$ with degree exactly~$m$. 

\begin{rem}[\cite{Bartko}]
	Some special values of the Hahn polynomials are as follows:
	\begin{align*}
		Q_{m}(0; \alpha, \beta, N)
		& =
		1,
		\quad\quad
		Q_{0}(x; \alpha, \beta, N)
		=
		1\\[1mm]
		Q_{m}(N-1; \alpha, \beta, N)
		& =
		(-1)^m
		\binom{m+\beta}{m}{\bigg/}
		\binom{m + \alpha}{m}
	\end{align*}
\end{rem}

\begin{thm}[\cite{Bachoc}, Proposition 5.1]\label{ThProBachoc}
	Let $ T $ be a $t$-subset of $[n]$. 
	For all~$d$, $ 1 \leq d \leq t \leq n/2 $, 
	let $ H_{d,T} \in \R \binom{[n]}{t} $ be given by:
	\[
		H_{d,T}(X) 
		:= 
		Q_d^t(t - |X \cap T|)
	\]
	for all $t$-set $X$, where $ Q_d^t(x) \equiv Q_d(x; t-n-1, -t-1, t+1) $ are orthogonal Hahn polynomials. Then $ H_{d,T} \in \Harm_d(n) $.
\end{thm}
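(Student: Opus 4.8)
The plan is to first read the assertion $H_{d,T}\in\Harm_d(n)$ in the only sense it can carry: since $H_{d,T}$ lives on $\binom{[n]}{t}$ while $\Harm_d(n)\subseteq\R\binom{[n]}{d}$, what must be shown is that $H_{d,T}$ is the level-$t$ realization of a genuine degree-$d$ harmonic function, i.e. that $H_{d,T}=\widetilde{\phi}$ on $\binom{[n]}{t}$ for some $\phi\in\ker\bigl(\gamma|_{\R\binom{[n]}{d}}\bigr)$. Equivalently, writing $\widetilde{\Harm}_e(n)$ for the image of $\Harm_e(n)$ under the extension map $f\mapsto\widetilde{f}$ of~(\ref{Equ:TildeF}) restricted to $\binom{[n]}{t}$, I would use the orthogonal Delsarte decomposition $\R\binom{[n]}{t}=\bigoplus_{e=0}^{t}\widetilde{\Harm}_e(n)$ (orthogonal for the inner product making the sets $X\in\binom{[n]}{t}$ orthonormal, valid since $t\le n/2$) and place $H_{d,T}$ in the degree-$d$ summand. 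The key first observation is that $H_{d,T}$ is $T$-\emph{zonal}: by definition it depends on $X$ only through $u:=|X\cap T|$. I would therefore restrict attention to the subspace $V_T\subseteq\R\binom{[n]}{t}$ of all $T$-zonal functions, which has dimension $t+1$ with the shell indicators ($u=0,1,\dots,t$) as a basis.

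Next I would record two facts pinning down the harmonic content of $V_T$. First, on zonal functions the ambient inner product becomes the weighted form $\sum_{u=0}^{t}\mu(u)\,p(u)q(u)$ with $\mu(u)=\binom{t}{u}\binom{n-t}{t-u}$, since $\mu(u)$ counts the $t$-sets $X$ with $|X\cap T|=u$. Second, a zonal function $X\mapsto p(|X\cap T|)$ lies in $\bigoplus_{e\le d}\widetilde{\Harm}_e(n)$ exactly when $p$ has degree at most~$d$ in $u$; this is because $\bigoplus_{e\le d}\widetilde{\Harm}_e(n)$ is precisely the space of functions that are polynomials of degree $\le d$ in the coordinate indicators $[i\in X]$, and $u=\sum_{i\in T}[i\in X]$ is linear in those indicators. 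Intersecting with $V_T$ gives $\dim\bigl(V_T\cap\bigoplus_{e\le d}\widetilde{\Harm}_e(n)\bigr)=d+1$, so a telescoping dimension count yields $\dim\bigl(V_T\cap\widetilde{\Harm}_d(n)\bigr)=1$ for every $d=0,\dots,t$. Thus the degree-$d$ zonal harmonic is unique up to scalar, and by orthogonality of the decomposition it is the unique degree-$d$ polynomial in $u$ that is $\mu$-orthogonal to all polynomials of lower degree.

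It then remains to identify this orthogonal polynomial with the prescribed Hahn polynomial. Writing $v:=t-u$ (the argument fed to $Q_d^t$), the weight becomes $\mu=\binom{t}{v}\binom{n-t}{v}$, and I would compare this with the standard Hahn orthogonality measure, which in this normalization is $\rho(v)=\binom{\alpha+v}{v}\binom{\beta+N-1-v}{N-1-v}$ for $(\alpha,\beta,N)=(t-n-1,\,-t-1,\,t+1)$. Expanding the binomials with negative upper index via $\binom{-a}{v}=(-1)^{v}\binom{a+v-1}{v}$ shows $\rho(v)=(-1)^{t}\binom{t}{v}\binom{n-t}{v}$, a constant multiple of $\mu$. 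Hence $Q_d^t(v)$ is $\mu$-orthogonal to every polynomial of degree $<d$ in $v$ (equivalently in $u$), and since $Q_d^t$ has degree exactly~$d$ it spans the one-dimensional line $V_T\cap\widetilde{\Harm}_d(n)$. Therefore $H_{d,T}(X)=Q_d^t(t-|X\cap T|)$ lies in $\widetilde{\Harm}_d(n)$, which is the claim.

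I expect the genuine work to sit in two places. The routine-but-delicate part is the weight computation of the last paragraph: the change of variable $u\mapsto t-u$ together with the Pochhammer bookkeeping hidden in the ${}_3F_2$ definition must be carried out carefully so that the Hahn measure matches $\mu$ up to the harmless sign $(-1)^t$. The conceptual crux, which I would either prove directly or cite from Delsarte's theory~\cite{Delsarte}, is the material invoked in the second paragraph, namely that the decomposition $\R\binom{[n]}{t}=\bigoplus_{e}\widetilde{\Harm}_e(n)$ is orthogonal and that for zonal functions harmonic degree coincides with polynomial degree in $u$; granting these, the dimension count and the uniqueness of orthogonal polynomials do the rest.
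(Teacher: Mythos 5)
The paper gives no proof of this statement at all: it is imported verbatim as \cite{Bachoc}, Proposition~5.1 (whose proof in turn rests on Delsarte's theory \cite{Delsarte}), so your argument can only be measured against the cited source, not against an in-paper argument. On its merits, your proposal is essentially the standard Delsarte--Bachoc argument and is sound. Your reading of the statement is the right one (membership in $\Harm_d(n)$ means lying in the image $\widetilde{\Harm}_d(n)$ of the extension map inside $\R\binom{[n]}{t}$), the reduction to $T$-zonal functions is the correct mechanism, and the computation on which the identification hinges is correct: with $v=t-u$ and $(\alpha,\beta,N)=(t-n-1,-t-1,t+1)$ one indeed gets
\begin{equation*}
\binom{\alpha+v}{v}\binom{\beta+N-1-v}{N-1-v}
=(-1)^{v}\binom{n-t}{v}\cdot(-1)^{t-v}\binom{t}{v}
=(-1)^{t}\binom{t}{v}\binom{n-t}{v},
\end{equation*}
which is $(-1)^t$ times the counting weight $\mu(u)=\binom{t}{u}\binom{n-t}{t-u}$; moreover in this range the denominators $(\alpha+1)_i=(t-n)_i$ and $(-N+1)_i=(-t)_i$ never vanish for $i\le d\le t\le n/2$, so $Q_d^t$ is well defined of exact degree $d$, as your argument requires.

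Two of your steps lean on more than you state, and both should be pinned to precise citations (which is exactly what Bachoc herself does). First, the telescoping count as written only yields $\dim\bigl(V_T\cap\widetilde{\Harm}_d(n)\bigr)\le 1$: to get equality, hence the existence of a nonzero zonal harmonic of each degree, you need $V_T\cap\bigoplus_{e\le d}\widetilde{\Harm}_e(n)=\bigoplus_{e\le d}\bigl(V_T\cap\widetilde{\Harm}_e(n)\bigr)$, which requires that the orthogonal projections onto the summands preserve zonality --- i.e.\ that the decomposition is $S_n$-stable and the projections commute with the stabilizer of $T$. ``Orthogonality of the decomposition'' alone does not give this. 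Second, your characterization ``zonal and in $\bigoplus_{e\le d}\widetilde{\Harm}_e(n)$ exactly when the profile is a polynomial of degree $\le d$ in $u$'' is justified inline only in the easy direction (degree $\le d$ in $u$ implies low harmonic degree, via multilinearity of the coordinate indicators); the converse, which your dimension bound $\dim\bigl(V_T\cap\bigoplus_{e\le d}\widetilde{\Harm}_e(n)\bigr)\le d+1$ also uses, is the nontrivial Delsarte fact that $\bigoplus_{e\le d}\widetilde{\Harm}_e(n)$ coincides with the image of inflation from level $d$. Since you explicitly flag both items as material to prove or cite from \cite{Delsarte}, I would classify them as citations to be made precise rather than gaps; with them in place, your proof is complete.
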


With the same hypothesis as above, 
an element of $\RR 2^{[n]}$,
$\widetilde{H}_{d,T}(X)$ for all subsets $X$ of $[n]$
only depends on $|X|$ and $|X\cap T|$.
We set 
$$\widetilde{H}_{d,T}(X) = h_{d,t}(|X|,|X\cap T|).$$
The explicit formula to compute $h_{d,t}(|X|,|X\cap T|)$
is given in~\cite[Proposition 5.2]{Bachoc} as follows:

\[
	h_{d,t}(\ell,i)
	=
	\frac{1}{\lambda_{d,t}}
	\sum_{I}
	v_{d,t}(\ell,i)\,
	Q_{d}^{t}
	(t-i_{1}-i_{2}),
\]
where
\begin{align*}
	I
	& =
	\begin{aligned}[t]
		\{i_1,i_2,i_3 \mid\,\,
		& 0 \leq i_1 \leq i,\\
		& 0 \leq i_2 \leq t-i,\\
		& 0 \leq i_3 \leq \ell -i,\\
		& i_1+i_2+i_3 \leq t,\\
		& i_1+i_3 \geq d \},
	\end{aligned}\\[1mm]
	\lambda_{d,t}
	& =
	\binom{n-2d}{t-d},\\[1mm]
	v_{d,t}(\ell,i)
	& =
	\binom{i}{i_{1}}
	\binom{t-i}{i_2}
	\binom{\ell-i}{i_3}
	\binom{n-\ell-t+i}{t-i_1-i_2-i_3}
	\binom{i_1+i_3}{d}.
\end{align*}

\subsection{Coefficients of $J_{C,T}^{(r)}$}

$ $\\[1mm]
Let $C$ be an $[n,k]$ code over $\FF_{q}$ and let $T$ be a $t$-subset of $[n]$.
Then the \emph{$r$-th higher Jacobi polynomial} of $C$ associated to $T$ 
can be redefine as follows:	
	\[
		J_{C,T}^{(r)}(w,z,x,y)
		=
		\sum_{\ell = 0}^{n}\,
		\sum_{i = 0}^{t}\,
		n_{\ell,i}^{(r)}(C,T)\,
		w^{t-i}\,
		z^{i}\,
		x^{n-t-\ell+i}\,
		y^{\ell-i},
	\]
	where
	\(
		n_{\ell,i}^{(r)}(C,T)
		:=
		\#
		\{
		D \in \D_{r}(C)
		\mid
		\wt(D) = \ell
		\text{ and }
		\wt_{T}(D) = i		
	\}.
	\)
On the other hand.
the harmonic higher weight enumerator~$W_{C,H_{d,T}}^{(r)}$
can be expressed as:
\begin{align*}
	W_{C,H_{d,T}}^{(r)}
	(x,y)
	& =
	\sum_{D \in \mathcal{D}_{r}(C)}
	\widetilde{H}_{d,T}(\Supp(D))\,
	x^{n-\wt(D)}
	y^{\wt(D)}\\
	& =
	\sum_{\ell=0}^{n}
	\left(
		\sum_{i = 0}^{t}
		h_{d,t}(\ell,i)\,
		n_{\ell,i}^{(r)}(C,T)
	\right)
	x^{n-\ell} y^{\ell}
\end{align*}	

Since $H_{d,T} \in \Harm_{d}(n)$, 
hence by Theorem~\ref{Thm:HarmDesign},
we have for all~$\ell$, the following $t$ linear equations
in $t+1$ unknowns $n_{\ell,i}^{(r)}(C,T)$, $0 \leq i \leq t$:
\begin{equation}\label{Equ:CoefJac1}
	\sum_{i = 0}^{t}
	h_{d,t}(\ell,i)\,
	n_{\ell,i}^{(r)}(C,T)
	=
	0,
	\quad
	\forall\,
	1 \leq d \leq t.
\end{equation}

For $d = 0$, we have the following equation for every~$\ell$:
\begin{equation}\label{Equ:CoefJac2}
	\sum_{i = 0}^{t}
	n_{\ell,i}^{(r)}(C,T)
	=
	A_{\ell}^{(r)}(C)
\end{equation}

Hence for all~$\ell$, $n_{\ell,i}^{(r)}(C,T)$
are the solutions of the system of linear equations~(\ref{Equ:CoefJac1})
and~(\ref{Equ:CoefJac2}).

\section*{Concluding remarks}
	The Assmus--Mattson theorem is one of the most
	important theorems in design and coding theory.
	Assmus--Mattson type theorems
	in lattices and vertex operator algebras are
	known as the Venkov~(cf.~\cite{Venkov}) and H\"ohn theorems (cf.~\cite{H1}), respectively; also see~\cite{Venkov2}. 
	For example, the $E_8$-lattice and moonshine vertex operator algebra
	$V^{\natural}$ provide
	spherical $7$-designs for all $(E_8)_{2m}$ and
	conformal $11$-designs for all $(V^{\natural})_{m}$, $m>0$.
	It is noteworthy that
	the $(E_8)_{2m}$ and $(V^{\natural})_{m+1}$
	are a spherical $8$-design and a conformal $12$-design, respectively, if and only if $\tau(m)=0$, 
	where
	$\tau(m)$ is known as Ramanujan's $\tau$-function satisfying
	\[
		q\prod_{m=1}^{\infty}(1-q^m)^{24}=\sum_{m=0}^{\infty}\tau(m)q^m;
	\]
	see~\cite{Miezaki}. Furthermore, Lehmer~\cite{Lehmer}
	conjectured 
	that
	\(
		\tau(m)\neq 0
	\)
	for all $m$.  
	Therefore, it is interesting to determine the lattice $L$
	(resp.~vertex operator algebra $V$) such that
	$L_m$ (resp.~$V_m$) are spherical (resp.~conformal) $t$-designs
	for all $m$ by the Venkov theorem (resp.~H\"ohn theorem) and
	$L_m'$ (resp.~$V_m'$) are spherical (resp.~conformal) $t'$-designs
	for some $m'$ with some $t'>t$.
	Our future studies will be inspired by these possibilities.
	For related results, see
	\cite{{BKST},{BM1},{Miezaki-Munemasa-Nakasora},{extremal design2 M-N}}.


\section*{Acknowledgements}
This work was supported by JSPS KAKENHI (22K03277) and SUST Research Centre (PS/2024/1/21).

\section*{Data availability statement}
The data that support the findings of this study are available from
the corresponding author.

\end{document}